\newtheorem{theorem}{Theorem}[section]
\newtheorem{lemma}{Lemma}[section]
\newtheorem{proposition}{Proposition}[section]
\theoremstyle{definition}
\newtheorem{definition}{Definition}[section]
\newtheorem{example}{Example}[section]
\theoremstyle{remark}
\newtheorem{remark}{Remark}[section]
\newcommand{\la}{\langle}
\newcommand{\ra}{\rangle}
\newcommand{\Gal}{\mathop{\mathrm{Gal}}\nolimits}
\newcommand{\Aut}{\mathop{\mathrm{Aut}}\nolimits}
\newcommand{\Div}{\mathop{\mathrm{div}}\nolimits}
\title[Galois Weierstrass points]{Galois Weierstrass points whose Weierstrass semigroups are generated by two elements}
\date{\today}
\thanks{This work was supported by JSPS KAKENHI Grant Numbers 15K04830 and 16K05094.}
\author{Jiryo Komeda}
\address{Department of Mathematics, Center for Basic Education and Integrated Learning, Kanagawa Institute of Technology, Atsugi, Kanagawa, 243-0292, Japan}
\email{komeda@gen.kanagawa-it.ac.jp}
\author{Takeshi Takahashi}
\address{Faculty of Engineering, Niigata University, Niigata 950-2181, Japan}
\email{takeshi@ie.niigata-u.ac.jp}
\subjclass[2010]{Primary 14H55; Secondly 14H50,14H30, 20M14.} 
\keywords{Galois Weierstrass point, Weierstrass semigroup of a point}
\begin{document}
\maketitle

\begin{abstract}
Let $C$ be a nonsingular projective curve of genus $\geq 2$ over an algebraically closed field of characteristic $0$. For a point $P$ in $C$, the Weierstrass semigroup $H(P)$ is defined as the set of non-negative integers $n$ for which there exists a rational function $f$ on $C$ such that the order of pole of $f$ at $P$ equals $n$ and $f$ is regular away from $P$. A point $P$ in $C$ is  referred to as a Galois Weierstrass point if the morphism corresponding to the complete linear system $|aP|$ is a Galois covering, where $a$ is the smallest positive integer in $H(P)$. In this paper, we investigate the number of Galois Weierstrass points whose Weierstrass semigroups are generated by two positive integers.
\end{abstract}

\section{Introduction and Theorem} \label{intro}

A curve refers to a complete nonsingular algebraic curve over an algebraically closed field $k$ of characteristic $0$. A plane curve refers to a (complete nonsingular) curve in $\mathbb{P}^2$.
 
Yoshihara introduced the notion of a Galois point for a plane curve as follows.
\begin{definition}[\cite{MiuraYoshihara2000,Yoshihara2001}]
Let $C$ be a plane curve of degree $d \geq 3$. For a point $P \in \mathbb{P}^2$, the projection $\pi_P: C \rightarrow \mathbb{P}^1$ from $P$ induces an extension of function fields $\pi_P^*: k(\mathbb{P}^1) \hookrightarrow k(C)$. $P$ is referred to as a {\it Galois point} for $C$ if the extension is Galois. Moreover, when $P \in C$ or when $P \not\in C$, the point is said to be an inner or outer Galois point, respectively.
\end{definition}

There is a result on the number of inner Galois points as follows. 
\begin{theorem}[\cite{MiuraYoshihara2000,Yoshihara2001}]\label{Theorem on Galois points}
Let $C$ be a plane curve of degree $d\geq 4$. If $d \geq 5$, then the number of inner Galois points for $C$ equals $0$ or $1$. 
 If $d=4$, then the number of inner Galois points for $C$ equals $0$, $1$, or $4$. Moreover, the number equals $4$ if and only if $C$ is projectively equivalent to the curve $XZ^3 + X^4 + Y^4 =0$, where $(X:Y:Z)$ be a system of homogeneous coordinates of $\mathbb{P}^2$. 
\end{theorem}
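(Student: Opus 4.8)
The plan is to convert the Galois condition into linear algebra on $\mathbb{P}^2$. Fix an inner Galois point $P \in C$. A general line through $P$ meets $C$ in $P$ together with $d-1$ further points, so $\pi_P \colon C \to \mathbb{P}^1$ has degree $d-1$ and $G_P := \Gal(k(C)/\pi_P^*k(\mathbb{P}^1))$ has order $d-1$. Every deck transformation $g \in G_P$ preserves each fiber of $\pi_P$, i.e.\ it sends each line $\ell$ through $P$ to a line through $g(P)$ while fixing its fiber $\ell \cap C \setminus \{P\}$; since a general such $\ell$ meets $C \setminus \{P\}$ in $d-1 \ge 2$ distinct points that span it, we get $g(\ell)=\ell$, and intersecting two such lines gives $g(P)=P$. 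Thus $G_P$ fixes $P$ and embeds faithfully into $\mathrm{GL}(T_P C) \cong k^*$, so $G_P$ is cyclic of order $d-1$. Because $\deg C \ge 4$, every automorphism of $C$ is induced by an element of $\mathrm{PGL}_3(k)$; a generator $\sigma_P$ of $G_P$ then fixes $P$ and every line through $P$, so after a linear change of coordinates with $P=(1:0:0)$ it is the homology $\sigma_P = \mathrm{diag}(\zeta,1,1)$ with $\zeta$ a primitive $(d-1)$-th root of unity, of center $P$ and axis $L_P = \{X=0\}$.

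Next I would extract a normal form. Semi-invariance of the defining polynomial $F$ under $\sigma_P$, together with $P \in C$ and irreducibility, forces
\[
F = X^{d-1}Z + G_0(Y,Z), \qquad \deg G_0 = d, \quad Z \nmid G_0,
\]
after rescaling. From this one reads off that the tangent line at $P$ is $\{Z=0\}$ and meets $C$ only at $P$, with multiplicity $d$; hence every inner Galois point is a total inflection point. Moreover the fixed points of $\sigma_P$ on $C$ are $P$ together with $C \cap L_P$, and a Riemann--Hurwitz count for the degree-$(d-1)$ cyclic cover $\pi_P$ shows that these are exactly its totally ramified points; in particular $C \cap L_P$ consists of $d$ distinct points. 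This identifies the inner Galois points of $C$ with the order-$(d-1)$ homologies in $\Aut(C)$ whose center lies on $C$.

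To count them, suppose $P_1 \ne P_2$ are inner Galois points, with homologies $\sigma_1,\sigma_2$ of centers $P_1,P_2$ and axes $L_1,L_2$. Since $\sigma_1 \in \Aut(C)$ permutes the finitely many, intrinsically defined inner Galois points and fixes every line through $P_1$, either $P_2$ lies on the axis $L_1$, or the $\sigma_1$-orbit of $P_2$ consists of at least two inner Galois points, all collinear with $P_1$ and all total inflection points. The strategy is to play both alternatives against the constraints that $C$ is smooth, irreducible, and of degree $d$: the collinearity of several total flexes and the mutual incidence relations forced among the centers and axes of several order-$(d-1)$ homologies preserving one smooth curve. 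For $d \ge 5$ this analysis yields a contradiction from a second point, so the number of inner Galois points is $0$ or $1$. For $d=4$ the homologies have order $3$ and the incidence analysis admits one extra highly symmetric configuration; tracing it through the normal form produces at most four centers, realized up to projective equivalence only by $XZ^3 + X^4 + Y^4 = 0$, which indeed carries the homology $\mathrm{diag}(1,1,\omega)$ of center $(0:0:1) \in C$ and three further ones, while the intermediate counts $2$ and $3$ are excluded by the same rigidity.

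The main obstacle is the degree-$4$ endgame: proving that four centers can occur only for this single curve up to projective equivalence, while simultaneously ruling out the counts $2$ and $3$. This requires a careful classification of how several order-$3$ homologies with centers on the curve may be configured --- which pairs of centers lie on which axes --- and checking each configuration against the normal form, the most delicate point being the uniqueness of the extremal quartic. The argument for $d \ge 5$ is comparatively rigid, the essential input being that a smooth plane curve of degree $d \ge 5$ admits too few collinear total inflection points, and too few order-$(d-1)$ homologies, to accommodate a second inner Galois point.
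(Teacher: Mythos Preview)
The paper does not prove this theorem. Theorem~\ref{Theorem on Galois points} is quoted from \cite{MiuraYoshihara2000,Yoshihara2001} as background, with no argument given in the present paper; there is therefore no ``paper's own proof'' to compare your attempt against.

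That said, a few words on your outline. The first half is solid and is essentially the approach of the original references: the projection from $P$ has degree $d-1$; the deck group fixes $P$ and acts faithfully on $T_PC$, hence is cyclic; for $d\ge 4$ automorphisms of a smooth plane curve are linear, so a generator is a homology with center $P$; semi-invariance then forces the normal form $X^{d-1}Z+G_0(Y,Z)$, from which one reads off that $P$ is a total inflection point and that the axis meets $C$ in $d$ distinct points. All of this is correct and well argued.

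Where your proposal becomes a sketch rather than a proof is the counting step. For $d\ge 5$ you assert that the incidence constraints between centers and axes, together with the scarcity of collinear total flexes, yield a contradiction from a second Galois point, but you do not actually carry this out; the genuine work in \cite{Yoshihara2001} lies precisely here. For $d=4$ you acknowledge outright that the ``endgame'' is the main obstacle and that it ``requires a careful classification'' you have not performed: showing that two inner Galois points already force the configuration of four, pinning down the quartic up to projective equivalence, and excluding the counts $2$ and $3$. As written, then, your proposal is a correct and well-oriented plan for reproducing the Miura--Yoshihara argument, but not yet a proof; the substantive case analysis that decides the theorem is still missing.
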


\begin{remark}
There are also a result on the number of outer Galois points, and many other results on Galois points. See \cite{MiuraYoshihara2000,Yoshihara2001,Fukasawa2010,OpenProblem} and so on. 
\end{remark}

On the other hand, inner Galois points for a plane curve are characterized as Galois Weierstrass points as follows. Let $\mathbb{Z}_{\geq 0}$ be the set of all non-negative integers. 

\begin{definition}[\cite{MorrisonPinkham1986}]
Let $C$ be a curve of genus $g \geq 2$. A point $P \in C$ is termed a {\it Galois Weierstrass point} (GW point), if the morphism 
$\Phi_{|aP|}: C \rightarrow \mathbb{P}^1$ corresponding to the complete linear system $|aP|$ is a Galois covering, where $a$ is the smallest positive integer of the Weierstrass semigroup
\begin{equation*}
H(P):=\{ n \in \mathbb{Z}_{\geq 0} \mid \exists f \in k(C) \text{ such that } (f)_\infty =nP \}.
\end{equation*}
\end{definition}

We denote $\langle a,b \rangle$ as the numerical semigroup generated by elements $a, b \in \mathbb{N}$.

\begin{theorem}[Theorem 2.3 in \cite{KomedaTakahashi2017}] \label{Theorem: GP is GWP}
If a point $P$ is an inner Galois point for a plane curve $C$ of degree $d \geq 4$, then  $P$ is a GW point with $H(P) = \langle d-1, d \rangle$. 

Conversely, if $P$ is a GW point of a curve $C$ with $H(P)=\langle d-1, d \rangle$ ($d \geq 4$), then $C$ is isomorphic to a plane curve and $P$ is an inner Galois point.
\end{theorem}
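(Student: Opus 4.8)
The plan is to treat the two implications separately, in both cases converting the Galois condition on a degree-$(d-1)$ covering of $\mathbb{P}^1$ into the arithmetic of $H(P)$ by means of Riemann--Roch and the geometry of line sections.

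For the forward direction, suppose $P$ is an inner Galois point. Since $C$ is smooth, $P$ has multiplicity $1$ on $C$, so the projection $\pi_P \colon C \to \mathbb{P}^1$ has degree $d-1$ and is Galois by hypothesis; let $G$ be its Galois group. First I would show that $P$ is a total inflection point, i.e.\ the tangent $T_P$ satisfies $T_P \cdot C = dP$. For this I would invoke the classical fact that every automorphism of a smooth plane curve of degree $\geq 4$ is induced by a linear transformation of $\mathbb{P}^2$; since each $\sigma \in G$ commutes with $\pi_P$, it preserves the pencil of lines through $P$ and hence fixes the center $P$. As $G$ acts transitively on the support of every fiber of a Galois covering, the $G$-orbit of $P$ inside the fiber over $\pi_P(P) = [T_P]$ is the whole support; but this orbit is $\{P\}$, so the fiber is supported only at $P$, and since $\deg(T_P \cdot C) = d$ we get $T_P \cdot C = dP$.

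Granting the total inflection, I would exhibit functions realizing the semigroup. Choosing a general line $L_1$ through $P$ and a general line $L_0$ not through $P$, the functions $L_1/T_P$ and $L_0/T_P$ have pole divisors $(d-1)P$ and $dP$, so $d-1, d \in H(P)$ and hence $\la d-1, d \ra \subseteq H(P)$. Because $\gcd(d-1,d) = 1$, the semigroup $\la d-1, d \ra$ has exactly $\binom{d-1}{2}$ gaps, which equals $g(C) = \binom{d-1}{2}$; comparing the (finite) gap counts of the inclusion forces $H(P) = \la d-1, d \ra$. The smallest positive element is $a = d-1$, and since $L((d-1)P) = \la 1, L_1/T_P \ra$, the map $\Phi_{|(d-1)P|}$ coincides with $\pi_P$, which is Galois; thus $P$ is a GW point with the asserted semigroup.

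For the converse, suppose $P$ is a GW point with $H(P) = \la d-1, d \ra$, so $a = d-1$ and $\Phi_{|(d-1)P|}$ is Galois of degree $d-1$, say with group $G$. Since $P$ is totally ramified and the characteristic is $0$, the inertia group at $P$ equals $G$ and is cyclic, so $G = \la \sigma \ra$ is cyclic of order $d-1$. By Riemann--Roch, counting the non-gaps $\leq d$ (namely $0, d-1, d$) gives $\dim |dP| = 2$, so with $x \in L((d-1)P)$ and $y \in L(dP)$ of pole orders $d-1$ and $d$ the map $\Phi_{|dP|} = (1:x:y)$ sends $C$ into $\mathbb{P}^2$ with $\Phi_{|dP|}(P) = (0:0:1) =: \tilde P$ and $k(C) = k(x,y)$. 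The plan is then to analyze the minimal relation $F(x,y) = 0$: balancing the pole orders at $P$ shows that $F$ has total degree $d$, so $C$ maps birationally onto a plane curve $\tilde C$ of degree $d$. I would deduce smoothness from the genus, since the arithmetic genus of $\tilde C$ is $\binom{d-1}{2} = g(C)$, which already equals its geometric genus, forcing $\tilde C$ to be smooth and $\Phi_{|dP|}$ an isomorphism. The projection of $\tilde C$ from $\tilde P = (0:0:1)$ is the $x$-coordinate map, i.e.\ $\Phi_{|(d-1)P|}$, which is Galois, so $\tilde P$ is an inner Galois point.

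I expect the main obstacle in the forward direction to be the reduction showing $P$ is a total inflection point, which rests on the linearity of automorphisms of smooth plane curves of degree $\geq 4$ together with transitivity of $G$ on fibers; once total ramification at $P$ is secured, the semigroup computation is routine. In the converse, the delicate step is controlling the image of $\Phi_{|dP|}$: pinning down that its degree is exactly $d$ from the pole-order balance in $F(x,y)$, and then upgrading birationality to an isomorphism via the equality of arithmetic and geometric genus.
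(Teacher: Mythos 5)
Your argument is correct, but note that this statement is quoted in the paper as Theorem~2.3 of \cite{KomedaTakahashi2017} without proof, so there is no in-paper argument to match against; the closest the paper comes is importing Lemma~2.3 of that reference (Lemma~\ref{KT2017} here), which reveals the route the original proof takes and which differs from yours in the converse direction. There, the GW hypothesis with $H(P)=\la d-1,d\ra$ immediately yields $k(C)=k(x,y)$ with $y^{d-1}=\prod_{i=1}^{d}(x-c_i)$ for distinct $c_i$, and one checks directly that the projective closure $Y^{d-1}Z=\prod_{i=1}^{d}(X-c_iZ)$ is a smooth plane curve of degree $d$ whose projection from $(0:1:0)$ is the given cyclic covering; this is explicit and makes the Galois structure visible in the equation. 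Your converse instead uses the Galois hypothesis only at the last step: you embed by $|dP|$, get $k(C)=k(x,y)$ from $\gcd(d-1,d)=1$, obtain a degree-$d$ birational plane model from $\deg(\Phi_{|dP|}^*\mathcal{O}(1))=d$, and force smoothness from $p_a=\binom{d-1}{2}=g(C)$; this is a clean and slightly more general argument (it shows any point with semigroup $\la d-1,d\ra$ puts $C$ in the plane with $P$ a total inflection, Galois or not). Your forward direction --- linearity of automorphisms of smooth plane curves of degree $\geq 4$, transitivity of the Galois group on fibers forcing $T_P\cdot C=dP$, then the gap count $\#(\mathbb{Z}_{\geq 0}\setminus\la d-1,d\ra)=\binom{d-1}{2}=g(C)$ --- is the standard one and is sound. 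The only steps worth writing out more fully are the assertion $k(C)=k(x,y)$ (coprimality of the two pole orders) and the identification of $\Phi_{|(d-1)P|}$ with $\pi_P$ via $h^0((d-1)P)=2$, both of which you essentially indicate.
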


\begin{remark}
In \cite{KomedaTakahashi2017}, we also characterized outer Galois points with the notation {\it weak Galois Weierstrass points}. 
\end{remark}

In this paper, we study the number of GW points whose semigroups are generated by two elements, as a generalization of Theorem~\ref{Theorem on Galois points}. Our main theorem is the following.

\begin{theorem} \label{Main Theorem}
Let $a$ and $b$ be integers such that $a \geq 3$, $b \geq a+2$ and the greatest common divisor $\gcd (a,b)=1$.
For a curve $C$ we set 
$$GW_{\la a,b\ra}(C):=\{P\in C\mid P \mbox{ is a GW point with }H(P)=\la a,b\ra\}.$$
Then we have the following: 
\begin{enumerate}
\item If $b\equiv a-1 \pmod{a}$, then $\# GW_{\la a,b\ra}(C)=0$ or $b+1$.
\item If $b\not\equiv a-1 \pmod{a}$, then $\# GW_{\la a,b\ra}(C)=0$ or $1$.
\end{enumerate}
\end{theorem}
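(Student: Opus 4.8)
The plan is to first reduce an arbitrary GW point to a concrete superelliptic normal form, then read off all candidate GW points from the ramification of the associated covering, and finally prove that no further GW points can occur.

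\medskip

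\emph{Step 1: Normal form.} Suppose $P\in GW_{\la a,b\ra}(C)$. Since $a=\min\bigl(H(P)\setminus\{0\}\bigr)$, the map $\pi=\Phi_{|aP|}\colon C\to\mathbb{P}^1$ has degree $a$ and is totally ramified over the image of $P$; as it is Galois, its group $G$ is the inertia group at $P$, hence cyclic of order $a$ (tame ramification in characteristic $0$). Choosing $x\in L(aP)$ with $(x)_\infty=aP$ and $y$ with $(y)_\infty=bP$, the generator $\sigma$ of $G$ fixes $x$ and scales the leading coefficient of $y$ by a primitive $a$-th root of unity (because $\gcd(a,b)=1$); after subtracting a polynomial in $x$ I may assume $\sigma^\ast y=\zeta^{-b}y$, so $y^a$ lies in the fixed field $k(x)$. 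Since $y$ has poles only at $P$, we get $y^a=R(x)$ with $R$ a polynomial, and total ramification over $\infty$ together with $\operatorname{ord}_P y=-b$ forces $\deg R=b$ and $\gcd(a,\deg R)=1$. Comparing the genus $g=(a-1)(b-1)/2$ of $C$ (the number of gaps of $\la a,b\ra$) with Riemann--Hurwitz then forces $R=\prod_{i=1}^{b}(x-\alpha_i)$ to be squarefree. Thus $C$ is the smooth model of $y^a=R(x)$, $P$ is the unique point over $\infty$, and $\pi$ is totally ramified exactly at $P$ and the points $Q_1,\dots,Q_b$ lying over the roots. For each $i$ the function $x-\alpha_i$ gives $aQ_i\sim aP$, so $\Phi_{|aQ_i|}=\pi$ is Galois; one checks the multiplicity of $H(Q_i)$ is again $a$, so each $Q_i$ is itself a GW point.

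\medskip

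\emph{Step 2: Semigroups at the $Q_i$.} I would compute $H(Q_i)$ through its Ap\'ery set with respect to $a$. The functions $f_{k,n}=y^k(x-\alpha_i)^{-n}$ with $0\le k\le a-1$ are regular away from $Q_i$ precisely when $an\ge bk$, and there they have pole order $an-k$; since these orders are pairwise distinct modulo $a$ there is no cancellation in sums, so the smallest non-gap in the residue class $-k$ is $w_k=a\lceil bk/a\rceil-k$. Writing $b=aq+s$ with $1\le s\le a-1$, a direct estimate of $w_{a-s}$ shows that $b\in H(Q_i)$ if and only if $w_{a-s}\le b$, and this inequality holds if and only if $s=a-1$, i.e. $b\equiv a-1\pmod a$. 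When it holds, $\la a,b\ra\subseteq H(Q_i)$ and both semigroups have the same genus, so $H(Q_i)=\la a,b\ra$ and every $Q_i$ lies in $GW_{\la a,b\ra}(C)$; together with $P$ this gives the $b+1$ candidates of case (1). When $b\not\equiv a-1\pmod a$ we get $b\notin H(Q_i)$, so $H(Q_i)\ne\la a,b\ra$ and $P$ is the only candidate coming from $\pi$, as in case (2).

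\medskip

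\emph{Step 3: Uniqueness of the covering (the main obstacle).} It remains to show that every point of $GW_{\la a,b\ra}(C)$ is a ramification point of this single covering, equivalently that the cyclic subgroup $G\le\Aut(C)$ producing such a GW structure is unique. Given a second GW point $P'$ with semigroup $\la a,b\ra$ and covering $\pi'\colon C\to\mathbb{P}^1$ of group $G'=\la\sigma'\ra$, the target is $G=G'$; then $P'$ is a fixed point of $\sigma$ with full ramification and hence one of $P,Q_1,\dots,Q_b$. I expect this to be the hard part. The plan is to apply the Castelnuovo--Severi inequality to the two degree-$a$ maps $\pi,\pi'$ to $\mathbb{P}^1$: if they were independent, then $(a-1)(b-1)/2=g\le(a-1)^2$, which fails once $b>2a-1$ and forces $\pi,\pi'$ to factor through a common quotient; a Riemann--Hurwitz analysis of $C/\la G,G'\ra$ should then collapse this to $G=G'$. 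The delicate residual cases are the range $a+2\le b\le 2a-1$ (where the inequality is not strict, in particular the equality case $b=2a-1$) and composite $a$ (where the common quotient need not be $\mathbb{P}^1$); these I would settle by a direct analysis of $\Aut(C)$ and of the branch divisor on $\mathbb{P}^1$, using that the $b+1$ totally ramified points are Weierstrass points of prescribed weight. Granting uniqueness, $GW_{\la a,b\ra}(C)$ is exactly the set of qualifying ramification points of $\pi$, so its cardinality is $b+1$ in case (1) and $1$ in case (2), while it is $0$ precisely when $C$ admits no such covering at all.
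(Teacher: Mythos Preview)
Your Steps~1 and~2 essentially reproduce the paper's Lemmas~2.5 and~2.8, and your identification of Castelnuovo--Severi as the tool for $b>2a$ matches Lemma~3.5. One difference there: the paper does not analyse $C/\la G,G'\ra$ via Riemann--Hurwitz as you propose, but instead runs an infinite descent. The common factor $C\to C'$ has $P'=\varphi(P)$ a total ramification point, and Lemmas~2.6--2.7 give $H(P')=\la a/t,\,b\ra$, so $P'$ is again a GW point of the same shape with two inequivalent $g^1_{a/t}$'s; iterating contradicts boundedness of $a$. Your ``collapse to $G=G'$'' would need more work, especially for composite $a$.

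The genuine gap is the residual range $a+2\le b\le 2a-1$, where you only sketch a plan. The paper's arguments are specific and not anticipated by your outline. For $a+2\le b\le 2a-2$ (so necessarily $b\not\equiv a-1\pmod a$): a second GW point $Q$ with semigroup $\la a,b\ra$ is \emph{unramified} for $\Phi_{|aP|}$ by Step~2, so its $G$-orbit contributes $a$ new GW points; thus $\delta\ge2$ forces $\delta\ge a+1$, and a third orbit forces $\delta\ge2a+1$. A Weierstrass-weight inequality (the paper's Proposition~2.1) then rules out $\delta\ge3a+1$, leaving $\delta\in\{0,1,a+1,2a+1\}$. The decisive trick to exclude $a+1$ and $2a+1$ is a linear-equivalence argument: labelling the GW points $P_0,\dots,P_a$, from $aP_0\sim\sum_{i\ge1}P_i$ and $aP_1\sim P_0+\sum_{i\ge2}P_i$ one deduces $(a+1)P_0\sim(a+1)P_1$, contradicting that $a+1$ is a gap of $\la a,b\ra$; similarly $2a+1$ is a gap. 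Your plan does not surface this mechanism.

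For the boundary case $b=2a-1$ (equality in Castelnuovo--Severi), the paper's Lemma~3.6 argues differently again: if $aQ\not\sim aP$ then the two pencils embed $C$ as a smooth $(a,a)$-curve in $\mathbb{P}^1\times\mathbb{P}^1$, and a K\"unneth computation of $h^0(naP)$ forces $H(P)=\{0,a,2a,\dots,(a-1)a\}\cup\mathbb{Z}_{\ge(a-1)a+1}$, which is not $\la a,2a-1\ra$. This is the step most likely to be missed by a generic ``analysis of $\Aut(C)$'', and it is absent from your sketch.
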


\begin{remark}
	In the case that $b = a+1$, by Theorems~\ref{Theorem on Galois points} and \ref{Theorem: GP is GWP}, we have that  
	$\# GW_{\la a, a+1 \ra}(C)= 0$ or $1$ if $a \geq 4$, and $\# GW_{\la 3,4 \ra}(C)=0,1$ or $4$.
\end{remark}

In Section~\ref{Section Preliminary}, we describe some elementary inequalities and preliminary results on GW points. We use these results in the proof of Theorem~\ref{Main Theorem}. In Section~\ref{Section Proof of Main Theorem}, we prove Theorem~\ref{Main Theorem}. In Section~\ref{Section Example}, we describe some examples of GW points. 

\section{Preliminary} \label{Section Preliminary}

In this section, we describe some elementary inequalities and preliminary results on GW points. 

\subsection{Elementary inequality} 

Let $a, r \in \mathbb{N}$ hold $a \geq 2$ and $\gcd (a,r)=1$. For $s \in \mathbb{R}$, we denote by $[s]$ the greatest integer less than or equal to $s$. 
In this subsection, we show the following. 

\begin{proposition} \label{InequalityOfTotalWeight}
Let $w_1$ an $w_2$ be numbers as follows: 
$$w_1 := \frac{a}{2} \sum_{l=1}^{a-1} \left( l+ \left[ \frac{lr}{a} \right] -1 \right) \left( l+ \left[ \frac{lr}{a} \right] \right) + \sum_{l=1}^{a-1}  \left( lr-\left[ \frac{lr}{a} \right] a \right)  \left( l+\left[ \frac{lr}{a} \right] \right)- \frac{g(g+1)}{2};$$
$$w_2 := \frac{a}{2} \sum_{l=1}^{a-1} \left( l+ \left[ \frac{lr}{a} \right] -1 \right) \left( l+ \left[ \frac{lr}{a} \right] \right) + \sum_{l=1}^{a-1}  \left(a-l \right)  \left( l+\left[ \frac{lr}{a} \right] \right)- \frac{g(g+1)}{2},$$
where $g=(a-1)(a+r-1)/2$. Then, $(3a+1)(w_1+(a+r)w_2) > (g-1)g(g+1). $
\end{proposition}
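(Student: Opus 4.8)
The plan is to turn the two floor-sums into elementary power sums plus a single Dedekind-type sum, and then to control that sum crudely. Write $b=a+r$, so $g=(a-1)(b-1)/2$, and for $l=1,\dots,a-1$ set $n_l:=l+[lr/a]=[lb/a]$ and $\rho_l:=lr-[lr/a]\,a=lb-a\,n_l$. Since $\gcd(a,b)=1$ the residues $\rho_1,\dots,\rho_{a-1}$ run exactly through $\{1,\dots,a-1\}$, i.e.\ they are a permutation of it; in particular $\sum_l n_l=g$, $\sum_l\rho_l=\sum_l l=a(a-1)/2$ and $\sum_l\rho_l^2=\sum_l l^2$. I would first record the conceptual reason these quantities occur: $w_1=\sum_{\text{gaps}}g_i-g(g+1)/2$ is precisely the Weierstrass weight of the semigroup $\langle a,b\rangle$, and the right-hand side $(g-1)g(g+1)=g^3-g$ is the total Weierstrass weight of a genus-$g$ curve, so the inequality is exactly the estimate one needs to cap the number of GW points.

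The first real step is to expand $w_1$ and $w_2$ in terms of $T:=\sum_{l=1}^{a-1}l^2=(a-1)a(2a-1)/6$ and the mixed sum $S:=\sum_{l=1}^{a-1}l\rho_l$. Using $n_l=(lb-\rho_l)/a$ one gets $\sum n_l^2=\bigl((b^2+1)T-2bS\bigr)/a^2$, $\sum n_l\rho_l=(bS-T)/a$ and $\sum l\,n_l=(bT-S)/a$; since $a+r=b$, the combination collapses to the closed form
\[
w_1+(a+r)w_2=\frac{b-1}{2a}\bigl((b^2+1)T-2bS\bigr)+\frac{a\,g\,(b-1)}{2}-\frac{(b+1)g(g+1)}{2}.
\]
All the arithmetic subtlety is now concentrated in the Dedekind-type sum $S$, which has no elementary closed form.

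The decisive observation is that $S$ need not be evaluated: it enters with a negative coefficient, so only an \emph{upper} bound is required, and because $\{\rho_l\}$ is a permutation of $\{1,\dots,a-1\}$ the rearrangement inequality gives $S\le\sum_l l^2=T$. Substituting $S\le T$ and setting $m=a-1$, $n=b-1$ turns the display into the explicit lower bound $w_1+(a+r)w_2\ge mn\bigl(n^2(m+2)-12\bigr)/24$. The target inequality then reduces, after clearing the positive factor $mn/24$ and using $3a+1=3m+4$, to the polynomial inequality $(10m+8)n^2>36(m+1)$, that is $\bigl(10(a-1)+8\bigr)(b-1)^2>36a$.

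Finally I would verify this over the admissible range $a\ge2$, $b\ge a+1$, $\gcd(a,b)=1$. Here $n=b-1\ge a\ge2$, so $(10m+8)n^2\ge(10m+8)(m+1)^2$, which exceeds $36(m+1)=36a$ for every $m\ge1$; for $a\ge3$ one has $(10a-2)(b-1)^2\ge(10a-2)a^2\gg36a$ with enormous room, and for $a=2$ it holds whenever $b\ge5$, the only boundary case being the degenerate pair $(a,b)=(2,3)$ with $g=1$, which lies outside the genus-$\ge2$ setting. The obstacle here is more psychological than technical: the temptation is to try to compute $S$ exactly, whereas the point is that the crude bound $S\le T$ is already strong enough, and the coefficient $3a+1$ in the statement is calibrated so that, although the leading terms agree with $g^3$ to top order (the ratio tends to $1$), the surviving lower-order surplus $(10m+8)n^2-36(m+1)$ stays strictly positive.
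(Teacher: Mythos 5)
Your proof is correct and reaches the conclusion by a genuinely different route from the paper. The paper first throws away $w_1$ via the inequality $w_1\ge w_2$ (its Lemma on $w_1\ge w_2$), reducing to $(3a+1)(a+r+1)w_2$, and then bounds $w_2$ from below term by term by replacing $[lr/a]$ with $(r-1)l/a$ (its three-inequalities lemma), ending with a comparison of two quartic products in $a,r$. You instead keep the exact combination: writing $b=a+r$, $T=\sum l^2$, $S=\sum l\rho_l$, your closed form
\[
w_1+(a+r)w_2=\frac{b-1}{2a}\bigl((b^2+1)T-2bS\bigr)+\frac{ag(b-1)}{2}-\frac{(b+1)g(g+1)}{2}
\]
checks out, and the single rearrangement bound $S\le T$ (the same permutation fact the paper records in its bijection lemma) yields $w_1+(a+r)w_2\ge \frac{mn}{24}\bigl((m+2)n^2-12\bigr)$ with $m=a-1$, $n=b-1$; the target then reduces exactly to $(10m+8)n^2>36(m+1)$, which I verified. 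This is cleaner and sharper than the paper's argument, since you lose nothing from $w_1-w_2$ and apply only one inequality. Two small points. First, your sentence claiming $(10m+8)(m+1)^2$ ``exceeds'' $36(m+1)$ for every $m\ge1$ is wrong at $m=1$, $n=2$ (equality, $72=72$); you immediately correct this by treating $a=2$ separately, but the slip should be removed. Second, the boundary pair $(a,r)=(2,1)$ is not merely outside the intended setting: there $w_1=w_2=0$ and $(g-1)g(g+1)=0$, so the strict inequality of the Proposition as literally stated is false --- and the paper's own final polynomial comparison also degenerates to $0>0$ there. Since the Proposition is only invoked with $a\ge5$, this is harmless, but your writeup is actually the more careful of the two in flagging it; you could strengthen the remark by noting explicitly that the hypothesis should exclude $(a,r)=(2,1)$.
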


For the proof of Proposition~\ref{InequalityOfTotalWeight}, we state some Lemmas.  

\begin{lemma} \label{bijection}
Let $\varphi: \{ 0, 1, 2, \dots, a-1 \} \rightarrow \{ 0, 1, 2, \dots, a-1 \}$ be a map given by $\varphi(l) \equiv rl \pmod{a}$, i.e., $\varphi(l)= lr-[ lr/a] a$. Then, the map $\varphi$ is bijective. In particular, $\sum_{l=1}^{a-1} \varphi(l)= \sum_{l=1}^{a-1}l=(a-1)a/2$ and $\sum_{l=1}^{a-1} l \varphi(l) \leq \sum_{l=1}^{a-1} l^2 = (a-1)a(2a-1)/6$.
\end{lemma}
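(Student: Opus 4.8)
The plan is to establish the three assertions in turn, letting the bijectivity do the work for both sums. First I would prove that $\varphi$ is a bijection directly from the coprimality hypothesis $\gcd(a,r)=1$: if $\varphi(l_1)=\varphi(l_2)$, then $r(l_1-l_2)\equiv 0\pmod{a}$, and since $\gcd(a,r)=1$ this forces $a\mid (l_1-l_2)$; as $l_1,l_2\in\{0,1,\dots,a-1\}$, we conclude $l_1=l_2$. Thus $\varphi$ is injective, and hence bijective, on the finite set $\{0,1,\dots,a-1\}$.

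For the first identity, I would observe that $\varphi(0)=0$, so the restriction of $\varphi$ to $\{1,\dots,a-1\}$ is a bijection of that set onto itself. Consequently the list $\varphi(1),\dots,\varphi(a-1)$ is a rearrangement of $1,\dots,a-1$, and summing yields $\sum_{l=1}^{a-1}\varphi(l)=\sum_{l=1}^{a-1} l=(a-1)a/2$ at once.

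The crux is the inequality $\sum_{l=1}^{a-1} l\varphi(l)\le \sum_{l=1}^{a-1} l^2$. Here I would invoke the rearrangement inequality: since $\varphi$ restricted to $\{1,\dots,a-1\}$ is a permutation $\pi$ of the increasing sequence $1<2<\cdots<a-1$, the quantity $\sum_{l} l\,\pi(l)$ is maximized over all permutations precisely when $\pi$ is the identity, i.e.\ when both factors are sorted in the same order, which gives the bound $\sum_{l=1}^{a-1} l\cdot l=(a-1)a(2a-1)/6$. The only point worth recording is that the hypothesis $\gcd(a,r)=1$ is exactly what guarantees $\varphi$ is a genuine permutation, so that the rearrangement inequality applies; no delicate estimation enters the argument, and I therefore expect the main (mild) obstacle to be merely stating the rearrangement inequality in the precise form needed rather than any computational difficulty.
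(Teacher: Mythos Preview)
Your proposal is correct and is exactly the argument the paper has in mind: the paper's own proof consists of the single word ``Obvious,'' and your write-up supplies precisely the routine details (injectivity from $\gcd(a,r)=1$, the sum identity from $\varphi$ being a permutation of $\{1,\dots,a-1\}$, and the bound from the rearrangement inequality) that justify that claim.
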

\begin{proof}
Obvious.
\end{proof}

\begin{lemma} \label{three inequalities}
We have the following:
\begin{enumerate}
\item $\displaystyle \sum_{l=1}^{a-1} \left[ \frac{lr}{a} \right] =\frac{(r-1)(a-1)}{2};$

\medskip
\item $\displaystyle \sum_{l=1}^{a-1} \,l \left[ \frac{lr}{a} \right] \geq \frac{(r-1)(a-1)(2a-1)}{6};$

\medskip
\item $\displaystyle \sum_{l=1}^{a-1} \left[ \frac{lr}{a} \right] ^2 \geq \frac{(r-1)^2(a-1)(2a-1)}{6a}.$
\end{enumerate}
\end{lemma}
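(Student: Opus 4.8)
Every one of the three estimates will follow from a single device. Writing $q_l := \left[\frac{lr}{a}\right]$ and letting $\varphi(l) := lr - \left[\frac{lr}{a}\right] a$ be the residue introduced in Lemma~\ref{bijection}, the defining relation $lr = \varphi(l) + q_l a$ lets me solve
\[ q_l = \frac{1}{a}\bigl( lr - \varphi(l) \bigr). \]
Substituting this into each of the three sums converts a sum involving the floor function into one that is linear or quadratic in the residues $\varphi(l)$, and the quantities $\sum \varphi(l)$, $\sum l\,\varphi(l)$ and $\sum \varphi(l)^2$ are exactly what Lemma~\ref{bijection} and the bijectivity of $\varphi$ control. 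All sums below run over $l = 1, \dots, a-1$, and I will freely use $\sum l = \frac{(a-1)a}{2}$ and $\sum l^2 = \frac{(a-1)a(2a-1)}{6}$.

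For (1) I would compute
\[ \sum q_l = \frac{1}{a}\Bigl( r \sum l - \sum \varphi(l) \Bigr) \]
and insert $\sum \varphi(l) = \frac{(a-1)a}{2}$ from Lemma~\ref{bijection}; the common factor $\frac{(a-1)a}{2}$ cancels the $\frac{1}{a}$ and leaves $\frac{(r-1)(a-1)}{2}$, so (1) is an exact identity. For (2) the same substitution gives
\[ \sum l\, q_l = \frac{1}{a}\Bigl( r \sum l^2 - \sum l\,\varphi(l) \Bigr), \]
and since the second sum is subtracted, the inequality $\sum l\,\varphi(l) \le \sum l^2$ from Lemma~\ref{bijection} pushes the right-hand side down to $\frac{r-1}{a}\sum l^2 = \frac{(r-1)(a-1)(2a-1)}{6}$, which is (2).

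Part (3) is the one needing a little care. Squaring the relation above yields
\[ \sum q_l^2 = \frac{1}{a^2}\Bigl( r^2 \sum l^2 - 2r \sum l\,\varphi(l) + \sum \varphi(l)^2 \Bigr). \]
Two facts now combine. First, because $\varphi$ is a bijection of $\{0,1,\dots,a-1\}$ with $\varphi(0)=0$, it permutes $\{1,\dots,a-1\}$, so $\sum \varphi(l)^2 = \sum l^2$ \emph{exactly}. Second, $\sum l\,\varphi(l) \le \sum l^2$ gives $-2r \sum l\,\varphi(l) \ge -2r \sum l^2$. Hence the bracket is at least $(r^2 - 2r + 1)\sum l^2 = (r-1)^2 \sum l^2$, and dividing by $a^2$ produces $\frac{(r-1)^2(a-1)(2a-1)}{6a}$, proving (3).

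I expect no genuine obstacle beyond the careful bookkeeping in (3): one must use the \emph{exact} value $\sum \varphi(l)^2 = \sum l^2$ for the squared term while treating the cross term with the sign-reversed form of the inequality $\sum l\,\varphi(l) \le \sum l^2$. Once $\sum\varphi(l)$, $\sum l\,\varphi(l)$ and $\sum\varphi(l)^2$ are pinned down by Lemma~\ref{bijection} and bijectivity, all three claims collapse to elementary algebra.
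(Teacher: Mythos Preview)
Your proof is correct and follows essentially the same route as the paper: substitute $[lr/a] = (lr - \varphi(l))/a$, reduce each sum to expressions in $\sum l$, $\sum l^2$, $\sum \varphi(l)$, $\sum l\,\varphi(l)$, $\sum \varphi(l)^2$, and finish using the bijectivity of $\varphi$ together with the rearrangement bound $\sum l\,\varphi(l) \le \sum l^2$. Your explicit remark that $\sum \varphi(l)^2 = \sum l^2$ is an \emph{equality} (while only the cross term is bounded) is a helpful clarification the paper leaves implicit.
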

\begin{proof}
Let $\varphi$ be the map stated in Lemma~\ref{bijection}. 
\begin{enumerate}
\item $\sum_{l=1}^{a-1}[lr/a] = \sum_{l=1}^{a-1} (lr/a - \varphi(l)/a) = 1/a (r \sum_{l=1}^{a-1} l - \sum_{l=1}^{a-1} \varphi(l))=(r-1)(a-1)/2$.

\medskip
\item $\sum_{l=1}^{a-1}l [ lr/a ] = \sum_{l=1}^{a-1} l (lr/a-\varphi(l)/a) = r/a \sum_{l=1}^{a-1} l^2 - 1/a \sum_{l=1}^{a-1} l\varphi(l) \geq r/a \sum_{l=1}^{a-1} l^2 - 1/a \sum_{l=1}^{a-1} l^2 = (r-1)(a-1)(2a-1)/6$.

\medskip
\item $\sum_{l=1}^{a-1} [lr/a] ^2 = \sum_{l=1}^{a-1} (lr/a-\varphi(l)/a)^2 = 1/a^2 (r^2 \sum_{l=1}^{a-1} l^2 -2r \sum_{l=1}^{a-1} l \varphi(l) + \sum_{l=1}^{a-1} \varphi(l)^2 ) \geq 1/a^2 (r^2 \sum_{l=1}^{a-1} l^2 -2r \sum_{l=1}^{a-1} l^2 + \sum_{l=1}^{a-1} l^2) = (r-1)^2(a-1)(2a-1)/(6a)$.
\end{enumerate}
\end{proof}

Remark that, in each formula in Lemma~\ref{three inequalities}, we can obtained the right-hand side from the left-hand side by replacing $[lr/a]$ to $(r-1)l/a$. 

\begin{lemma} \label{w_1 >= w_2}
$w_1 \geq w_2$. 
\end{lemma}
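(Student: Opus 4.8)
The plan is to exploit the fact that $w_1$ and $w_2$ share the first double sum and the term $g(g+1)/2$ verbatim, so their difference collapses to a single sum over the remaining middle terms. Writing $\varphi(l) = lr - [lr/a]a$ for the bijection of Lemma~\ref{bijection} and abbreviating $c_l := l + [lr/a]$, I would first record the identity
$$w_1 - w_2 = \sum_{l=1}^{a-1}\bigl(\varphi(l) - (a-l)\bigr)\,c_l,$$
so that the whole task reduces to showing this sum is nonnegative. A useful preliminary observation is that $\varphi$ fixes $0$ and hence permutes $\{1,\dots,a-1\}$; thus Lemma~\ref{bijection} gives not only $\sum_{l=1}^{a-1}\varphi(l) = \sum_{l=1}^{a-1} l$ but also $\sum_{l=1}^{a-1}\varphi(l)^2 = \sum_{l=1}^{a-1} l^2$. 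In particular $\sum_{l=1}^{a-1}\bigl(\varphi(l)-(a-l)\bigr)=0$, which warns that the displayed sum is genuinely signed and cannot be bounded term by term.

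Next I would clear the floor function by writing $c_l = l + (lr - \varphi(l))/a = (l(a+r) - \varphi(l))/a$, so that
$$a\,(w_1 - w_2) = \sum_{l=1}^{a-1}\bigl(\varphi(l) + l - a\bigr)\bigl(l(a+r) - \varphi(l)\bigr).$$
Expanding the product and substituting the two permutation identities $\sum\varphi = \sum l$ and $\sum\varphi^2 = \sum l^2$, I expect every $(a+r)$-dependent and $(a+r)$-independent cross term to organise into a common factor $(a+r-1)$, leaving
$$a\,(w_1 - w_2) = (a+r-1)\Bigl(\sum_{l=1}^{a-1} l\,\varphi(l) + \sum_{l=1}^{a-1} l^2 - a\sum_{l=1}^{a-1} l\Bigr).$$
Since $\sum_{l=1}^{a-1} l^2 - a\sum_{l=1}^{a-1} l = -\sum_{l=1}^{a-1} l(a-l)$, the bracket is exactly $\sum_{l=1}^{a-1} l\,\varphi(l) - \sum_{l=1}^{a-1} l(a-l)$.

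Finally, the positivity is settled by the rearrangement inequality: among all permutations $\psi$ of $\{1,\dots,a-1\}$, the sum $\sum_l l\,\psi(l)$ is minimised by the order-reversing permutation $\psi(l) = a-l$, so $\sum_l l\,\varphi(l) \geq \sum_l l(a-l)$ and the bracket is $\geq 0$. Because $a>0$ and $a+r-1>0$, this gives $w_1 \geq w_2$. The only real subtlety — the main obstacle — is the bookkeeping in the expansion: one must check that the quadratic terms in $\varphi$ cancel against $\sum\varphi^2 = \sum l^2$ and that the surviving terms all carry the factor $(a+r-1)$. Once that factor is extracted the inequality is an immediate instance of rearrangement, needing no estimate on $r$ beyond $a+r-1>0$, and equality holds precisely when $\varphi(l)=a-l$, i.e.\ when $r\equiv a-1\pmod a$.
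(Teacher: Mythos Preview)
Your argument is correct, and it shares the same underlying mechanism as the paper's: both reduce $w_1-w_2$ to the single sum $\sum_{l=1}^{a-1}\bigl(\varphi(l)-(a-l)\bigr)c_l$ with $c_l=l+[lr/a]$, and both finish with the rearrangement inequality. The difference is in the execution. The paper applies rearrangement immediately to $\sum_l \varphi(l)\,c_l \geq \sum_l (a-l)\,c_l$, using only that $l\mapsto c_l$ is strictly increasing and that $l\mapsto a-l$ is the order-reversing permutation of $\{1,\dots,a-1\}$; no expansion of $c_l$, no use of $\sum\varphi^2=\sum l^2$, and no extraction of the factor $(a+r-1)$ is needed. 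Your route instead substitutes $c_l=(l(a+r)-\varphi(l))/a$, expands, and cancels via the permutation identities before applying rearrangement in the simpler form $\sum_l l\,\varphi(l)\geq\sum_l l\,(a-l)$. That algebra is right (and the factor $(a+r-1)$ does emerge cleanly, as you claim), but it is a detour: the monotonicity of $c_l$ is all that is really required. What your longer computation buys is the explicit formula $a(w_1-w_2)=(a+r-1)\bigl(\sum l\varphi(l)-\sum l(a-l)\bigr)$, which makes the equality case $r\equiv a-1\pmod a$ (the paper's Remark~2.2) transparent.
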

\begin{proof}
Remark that $lr-[lr/a]a = \varphi (l)$ where $\varphi$ is the bijective map stated in Lemma~\ref{bijection}, and the map $l \mapsto l+[lr/a]$ is strictly increasing. Hense, $$\sum_{l=1}^{a-1}  \left( lr-\left[ \frac{lr}{a} \right] a \right)  \left( l+\left[ \frac{lr}{a} \right] \right) \geq \sum_{l=1}^{a-1}  \left(a-l \right)  \left( l+\left[ \frac{lr}{a} \right] \right). $$
\end{proof}

\begin{remark} \label{Remark w_1 > w_2}
    We see that $r \equiv a-1 \pmod{a}$ if and only if $w_1 = w_2$. 
\end{remark}

\begin{proof}[Proof of Proposition~\ref{InequalityOfTotalWeight}]
By Lemmas~\ref{three inequalities} and \ref{w_1 >= w_2}, we have the following.
\begin{align*}
 & \hspace{3ex} (3a+1)(w_1+(a+r)w_2) \\
 & \geq (3a+1)(a+r+1) w_2 \\ 
 & = \frac{1}{2}(3a+1)(a+r+1) \left( \sum_{l=1}^{a-1} (l+[lr/a])(a+(a-2)l+a[lr/a]) -g(g+1) \right) \\
 & \geq \frac{1}{2} (3a+1)(a+r+1) \left( \sum_{l=1}^{a-1} (l+(r-1)l/a)(a+(a-2)l+(r-1)l) -g(g+1) \right) \\
 & =\frac{g}{4} (a^2+ar-2a+r-3) (a^2+ar+4a/3+r/3+1/3).
\end{align*}

On the other hand, we have the following.
$$ \hspace{3ex} (g-1)g(g+1) = \frac{g}{4}(a^2+ar-2a-r-1)(a^2+ar-2a-r+3).$$
Hence, we conclude. 
\end{proof}

\subsection{GW point}

First, we state the structure of the function field of a curve having a GW point with a Weierstrass semigroup generated by two integers. 

\begin{lemma}[Lemma~2.3 in \cite{KomedaTakahashi2017}] \label{KT2017}
Let $P$ be a GW point of a curve $C$ with $H(P)= \la a,b \ra$. Then, for any $x \in k(C)$ with $(x)_{\infty} = aP$, there exists $y \in k(C)$ with $(y)_\infty = bP$ such that $k(C)=k(x,y)$ and $y^a= \prod_{i=1}^b (x-c_i)$, where each $c_i$ is a distinct element of $k$.  
\end{lemma}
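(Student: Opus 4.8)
The plan is to realize $x$ as a coordinate on the target of the Galois covering $\Phi_{|aP|}\colon C\to\mathbb{P}^1$ and to extract the desired $y$ from Kummer theory. First I would observe that, since $(x)_\infty=aP$, the function $x$ defines a degree-$a$ morphism $C\to\mathbb{P}^1$ that is totally ramified over $\infty$, with $P$ the unique point in that fibre. As $P$ is a GW point, $k(C)/k(x)$ is Galois of degree $a$; write $G=\Gal(k(C)/k(x))$. Total ramification forces the decomposition group at $P$ to be all of $G$, and since $k$ is algebraically closed the residue extension is trivial, so the inertia group coincides with $G$; because $\mathrm{char}\,k=0$ this inertia group is cyclic. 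Hence $G=\la\sigma\ra$ is cyclic of order $a$, and I may fix a generator $\sigma$ and a primitive $a$-th root of unity $\zeta$ so that $\sigma$ acts on a local uniformizer at $P$ by $\zeta$.

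Next I would produce $y$. With respect to this action a function of pole order $m$ at $P$ is a leading eigenvector for the eigenvalue $\zeta^{-m}$, so projecting any function of pole order $b$ onto the $\zeta^{-b}$-eigenspace of the $G$-stable space $L(bP)$ yields $y$ with $(y)_\infty=bP$ and $\sigma^*y=\zeta^{-b}y$. Then $\sigma^*(y^a)=\zeta^{-ab}y^a=y^a$, so $y^a=f(x)\in k(x)$. Since $b\not\equiv 0\pmod{a}$, the field $k(x,y)$ realises the pole orders $a$ and $b$ at $P$, hence all of $H(P)=\la a,b\ra$; a proper intermediate field would correspond to a nontrivial subgroup $H'\le G$ whose order divides both $a$ and $b$, impossible as $\gcd(a,b)=1$, so $k(C)=k(x,y)$. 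Because $y$ has no pole off $P$ and $P$ lies over $x=\infty$, the function $f$ has no finite poles, i.e.\ $f$ is a polynomial; rescaling $y$ by a constant I may write $f(x)=\prod_{i=1}^{s}(x-c_i)^{e_i}$ with distinct $c_i\in k$ and each $e_i\ge 1$.

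It remains to force $s=b$ and every $e_i=1$. Comparing pole orders at $P$ gives $ab=\mathrm{ord}_P(y^a)=a\sum_i e_i$, so $\sum_i e_i=b$. Moreover $e_i<a$ for each $i$: otherwise $y/(x-c_i)$ would be regular away from $P$ with pole order $b-a$ there, contradicting $b-a\notin H(P)$ (which holds since $\gcd(a,b)=1$ and $a\ge 3$). I would then apply Riemann--Hurwitz to the cyclic cover $y^a=\prod_i(x-c_i)^{e_i}$: over each $c_i$ there are $\gcd(e_i,a)$ points of ramification index $a/\gcd(e_i,a)$, while over $\infty$ there is only the totally ramified point $P$ because $\gcd(b,a)=1$. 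Using that the genus equals the number of gaps of $H(P)$, namely $g=(a-1)(b-1)/2$, the formula collapses to $\sum_i\bigl(a-\gcd(e_i,a)\bigr)=(a-1)b$. As $\gcd(e_i,a)\ge 1$, each summand is at most $a-1$, so $(a-1)b\le s(a-1)$ and thus $s\ge b$; but $\sum_i e_i=b$ with $e_i\ge 1$ forces $s\le b$. Therefore $s=b$ and every $e_i=1$, which is the asserted form $y^a=\prod_{i=1}^{b}(x-c_i)$.

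The step I expect to be most delicate is the simultaneous control in the second paragraph, namely choosing $y$ so that it has pole order exactly $b$ and at the same time satisfies $y^a\in k(x)$; this is precisely what makes the eigenvector analysis of the $G$-action on $L(bP)$, together with the identification of how $\sigma$ acts on a uniformizer at $P$, indispensable. By contrast, the final Riemann--Hurwitz squeeze is routine bookkeeping once the ramification over each $c_i$ and over $\infty$ has been correctly tabulated.
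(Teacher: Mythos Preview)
The paper does not supply its own proof of this lemma: it is quoted from \cite{KomedaTakahashi2017} (their Lemma~2.3) and used as a black box. So there is nothing in the present paper to compare your argument against.

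That said, your proof is correct and self-contained. Two minor comments. First, the step showing $e_i<a$ is not needed: the Riemann--Hurwitz squeeze already gives $s=b$ and all $e_i=1$ directly, since the inequality $a-\gcd(e_i,a)\le a-1$ requires only $e_i\ge 1$. Second, your justification that $k(C)=k(x,y)$ is right in spirit but the phrasing ``a nontrivial subgroup $H'\le G$ whose order divides both $a$ and $b$'' is slightly off. Cleaner: the subgroup of $G=\la\sigma\ra$ fixing $y$ consists of those $\sigma^j$ with $\zeta^{-jb}=1$, i.e.\ $a\mid jb$; since $\gcd(a,b)=1$ this forces $a\mid j$, so the stabilizer is trivial and $k(x,y)=k(C)$. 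The eigenspace extraction of $y$ from $L(bP)$ and the Riemann--Hurwitz count are exactly the right moves.
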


\begin{remark} \label{genus of <a,b>}
	Since
	$\mathbb{Z}_{\geq 0} \setminus \la a,b \ra = \{ jb - na \mid j \in \{1, \dots, a-1 \}, n \in \mathbb{N}, jb-na > 0 \},  $
	we have that $\#( \mathbb{Z}_{\geq 0} \setminus \la a,b \ra )= \sum_{j=1}^{a-1} [jb/a]= (a-1)(b-1)/2$ by Lemma~\ref{three inequalities}. For a point $P$ in a curve $C$, we have that 
	$\# (\mathbb{Z}_{\geq 0} \setminus H(P)) = g(C)$, where $g(C)$ is the genus of $C$. Hence, 
	if a curve $C$ has a point $P$ with $H(P)= \la a,b \ra$, then $g(C)=(a-1)(b-1)/2$. 
\end{remark}

Let $\pi:C\rightarrow C'$ be a cyclic covering of curves with a total ramification point $P$ over $P'$. We can find $H(P')$ from some $H(P)$ as follows. 

\begin{lemma} \label{total}
Let $\pi:C\rightarrow C'$ be a cyclic covering of degree $t$ with a total ramification point $P$ over $P'$. 
Then, we have
$H(P')=\{h' \in \mathbb{Z}_{\geq 0} \mid th'\in H(P)\}.$
\end{lemma}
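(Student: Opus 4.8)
The plan is to prove the two inclusions $H(P') \subseteq \{h' \mid th' \in H(P)\}$ and its reverse separately. The forward inclusion is immediate: if $f \in k(C')$ satisfies $(f)_\infty = h'P'$, then since $P$ is the unique point of $C$ lying over $P'$ and the ramification index there is $t$, the pullback $\pi^* f$ has a pole only at $P$, of order $\mathrm{ord}_{P'}(f)\cdot t = th'$; thus $th' \in H(P)$.

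For the reverse inclusion, suppose $th' \in H(P)$ and choose $g \in k(C)$ with $(g)_\infty = th'P$. The idea is to project $g$ onto $k(C')$ and show the resulting function has pole order exactly $h'$ at $P'$. Since $\pi$ is cyclic, $G := \Gal(k(C)/k(C'))$ is cyclic of order $t$, generated by some $\sigma$; as $\mathrm{char}\, k = 0$ and $k$ is algebraically closed, I fix a primitive $t$-th root of unity $\zeta$ and decompose $k(C) = \bigoplus_{j=0}^{t-1} V_j$ into eigenspaces $V_j = \{h \mid \sigma(h) = \zeta^j h\}$, each a one-dimensional $k(C')$-vector space with $V_0 = k(C')$. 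Because $P$ is the unique point over $P'$, it is fixed by $G$, so each $\sigma^i$ preserves $\mathrm{ord}_P$ and carries functions regular away from $P$ to functions regular away from $P$.

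The key step is a valuation count at $P$. For nonzero $h_1, h_2 \in V_j$ the ratio $h_1/h_2$ lies in $k(C') = V_0$, whence $\mathrm{ord}_P(h_1) - \mathrm{ord}_P(h_2) \in t\mathbb{Z}$; thus the orders of the nonzero elements of $V_j$ occupy a single residue class $c_j$ modulo $t$. I would then show that total ramification forces $c_0, \dots, c_{t-1}$ to be pairwise distinct, hence to exhaust $\mathbb{Z}/t\mathbb{Z}$: for $j \neq j'$ a unit ratio lying in $V_{j-j'}$ would have $\sigma$-eigenvalue $\zeta^{j-j'} \neq 1$ acting nontrivially while necessarily fixing its nonzero residue in $k$, a contradiction; concretely one may instead write $k(C) = k(C')(z)$ with $z^t = u \in k(C')$ and $\gcd(\mathrm{ord}_{P'}(u), t) = 1$, so that $c_j \equiv j\,\mathrm{ord}_{P'}(u) \pmod t$ runs over all residues. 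Granting this, decompose $g = \sum_j g_j$ with $g_j \in V_j$; since the $\mathrm{ord}_P(g_j)$ lie in distinct residue classes there is no cancellation, so $\mathrm{ord}_P(g) = \min_j \mathrm{ord}_P(g_j) = -th'$ is attained by a unique $j_0$, and $-th' \equiv 0 \pmod t$ forces $j_0 = 0$. Hence $g_0 \in k(C')$ is nonzero with $\mathrm{ord}_P(g_0) = -th'$, and since each $\sigma^i(g)$ is regular away from $P$ so is $g_0 = \frac{1}{t}\sum_i \sigma^i(g)$; therefore $g_0$, viewed on $C'$, has its only pole at $P'$ of order $\mathrm{ord}_P(g_0)/t = h'$, giving $h' \in H(P')$.

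The main obstacle is exactly this valuation step: verifying that total ramification makes the $t$ eigenspace residues distinct, and consequently that passing to the $V_0$-component does not lower the pole order. Once the separation of residue classes is established, the remainder is routine bookkeeping with the Galois average.
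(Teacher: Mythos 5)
Your proposal is correct and follows essentially the same route as the paper: both reduce the nontrivial inclusion to showing that the Galois average $\frac{1}{t}\sum_{i}(\sigma^*)^i g$ lies in $k(C')$, is regular away from $P$, and retains pole order exactly $th'$ at $P$. The only difference is in how the non-cancellation of the leading term is justified: you use the eigenspace/residue-class argument for valuations at the fixed point, while the paper performs the equivalent local computation with a uniformizer $z$ at $P$ satisfying $\sigma^* z=\zeta z$, where the leading coefficients all equal $1$ because $\zeta^{th'}=1$.
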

\begin{proof}
Let $\sigma$ be an automorphism of $C$ such that $C'=C/\la \sigma \ra$.

Let $h'\in H(P')$, i.e., there exists a rational function $f$ on $C'$ such that $(f)_{\infty}=h'P'$.
Then, we get $(\pi^*f)_{\infty}=th'P$, where we set $\pi^*f=f\circ \pi$.
Hence, we obtain
$H(P')\subset \{h'\mid th'\in H(P)\}.$

Conversely, let $h'\in \mathbb{Z}_{\geq 0}$ with $th'\in H(P)$.
Then we have a rational function ${\tilde f}$ on $C$ with $({\tilde f})_{\infty}=th'P$.
Consider
$F={\tilde f}+\sigma^*{\tilde f}+(\sigma^*)^2{\tilde f}+\dots+(\sigma^*)^{t-1}{\tilde f},$
which is a rational function on $C$.
Then we get
$\sigma^*F=\sigma^*{\tilde f}+(\sigma^*)^2{\tilde f}+(\sigma^*)^3{\tilde f}+\dots+(\sigma^*)^{t-1}{\tilde f}+{\tilde f}=F.$ 
Hence, we get $F\in k(C')$.
Let
$${\tilde f}=\frac{1}{z^{th'}}+\frac{c_{-th'+1}}{z^{th'-1}}+\dots,$$
where $z$ is a local parameter at $P$ with $\sigma^*z=\zeta z$, here $\zeta$ is a primitive $t$-th root of unity.
Then we obtain
$\sigma^*{\tilde f}= 1/z^{th'}+\dots,$
which implies that
$F=t/z^{th'}+\dots.$
Then we get $(F)_{\infty}=th'P$ on $C$,
which implies that
$(F)_{\infty}=h'P'$ on $C'$.
Hence, we obtain $h'\in H(P')$.
\end{proof}

\begin{lemma} \label{gen2}
Let $a$, $b$ and $t$ are positive integers with $a<b$, $\gcd (a,b)=1$ and $t|a$.
Then, we have $\{h' \in \mathbb{Z}_{\geq 0} \mid th'\in \la a,b\ra\}=\la a/t, b \ra.$
\end{lemma}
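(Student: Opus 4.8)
The plan is to prove the set equality $\{h' \in \mathbb{Z}_{\geq 0} \mid th' \in \la a,b\ra\} = \la a/t, b\ra$ by establishing the two inclusions separately. The single fact that makes everything work is that $\gcd(t,b)=1$: since $t \mid a$ and $\gcd(a,b)=1$, any common divisor of $t$ and $b$ must divide both $a$ and $b$, hence equals $1$. I would record this observation at the outset, because it is exactly what will allow divisibility by $t$ to be transferred onto the coefficient of $b$.

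The inclusion $\la a/t, b\ra \subseteq \{h' \mid th' \in \la a,b\ra\}$ is immediate and uses nothing beyond $t\mid a$. If $h' = m(a/t) + nb$ with $m, n \in \mathbb{Z}_{\geq 0}$, then multiplying by $t$ gives $th' = ma + (tn)b$, which is manifestly an element of $\la a,b\ra$. For the reverse inclusion, suppose $h' \in \mathbb{Z}_{\geq 0}$ satisfies $th' \in \la a,b\ra$, say $th' = ma + nb$ with $m,n \in \mathbb{Z}_{\geq 0}$. Reducing modulo $t$ and using $t \mid a$ yields $nb \equiv 0 \pmod{t}$; then $\gcd(t,b)=1$ forces $t \mid n$, so I may write $n = tn'$ with $n' \in \mathbb{Z}_{\geq 0}$. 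Substituting back gives $th' = ma + tn'b = t\,m(a/t) + t n' b$, and dividing through by $t$ produces $h' = m(a/t) + n'b \in \la a/t, b\ra$, completing the second inclusion.

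There is no serious obstacle here: the whole argument rests on the coprimality $\gcd(t,b)=1$, and once it is in hand the modular reduction does all the work. The hypothesis $a<b$ is never used, while $\gcd(a,b)=1$ enters only to supply $\gcd(t,b)=1$ (and, incidentally, $\gcd(a/t,b)=1$, so that $\la a/t,b\ra$ is again minimally generated by its two listed elements). If anything requires care, it is only the bookkeeping in checking that the coefficients remain in $\mathbb{Z}_{\geq 0}$ throughout, which is automatic since $n=tn'$ with $n\geq 0$ forces $n'\geq 0$.
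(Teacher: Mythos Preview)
Your proof is correct and follows essentially the same approach as the paper: both establish the two inclusions directly, with the nontrivial one hinging on the observation that $\gcd(t,b)=1$ forces $t$ to divide the coefficient of $b$ in any representation $th'=ma+nb$. The only cosmetic difference is that you phrase this step as a reduction modulo $t$, whereas the paper writes it out by factoring $a=td$ explicitly.
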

\begin{proof}
Let $\displaystyle h'\in \la a/t,b\ra$, i.e., $\displaystyle h'=n\cdot a/t+mb$ with $n\in \mathbb{Z}_{\geq 0}$ and $m\in \mathbb{Z}_{\geq 0}$.
Hence, we get $th'=na+mtb\in \la a,b\ra$.
Thus, we obtain $\displaystyle \la a/t,b\ra\subset \{h'\mid th'\in \la a,b\ra\}$.

Let $h'$ with $th'\in \la a,b\ra$, i.e., $th'=na+mb$ with $n,m\in \mathbb{Z}_{\geq 0}$.
In view of $t|a$ we have $a=td$ for some $d$.
Hence, we get
$mb=th'-na=th'-ntd=t(h'-nd).$
Since $\gcd (a,b)=1$ and $t|a$, we have $\gcd (t,b)=1$.
Thus, we get $t|m$, i.e., $m=td'$ for some $d'$.
Hence, we have
$th'=na+td'b$, which implies that
$\displaystyle h'=n\cdot a/t+d'b\in \la a/t, b \ra.$
Thus, we obtain
$\{h'\mid th'\in \la a,b\ra\}\subset \la a/t,b\ra.$
\end{proof}

For a cyclic covering which is given by a GW point whose semigroup is generated by two integers, we can compute the Weierstrass semigroup of a ramification point as follows. 

\begin{lemma} \label{Ramification Points}
Let $P$ be a GW point with $H(P)=\la a, b \ra$ on a curve $C$. Then, the ramification points of the map $\Phi_{|aP|}$ are $P$ and other $b$ points (say $Q_1, \dots, Q_b$) which are totally ramified. Furthermore, the Weierstrass semigroup $H(Q_i)$ of every $Q_i$ is 
$\la a,b \ra $ if $b+1 \equiv 0 \pmod{a}$, or
$$\la \{ a \} \cup \{ l(b+a-\bar{r}) - [l(a-\bar{r}+1)/a]a \mid l=1, \dots, a-1 \} \ra $$
if $b+1 \equiv \bar{r} \pmod{a}$ and $0<\bar{r}<a$.
\end{lemma}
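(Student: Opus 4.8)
The plan is to make the Galois covering $\Phi_{|aP|}$ completely explicit and then read off both the ramification and the local structure at each branch point. Since $a=\min\bigl(H(P)\setminus\{0\}\bigr)$ we have $\dim|aP|=1$, so $\Phi_{|aP|}$ is, after a choice of coordinate on $\mathbb{P}^1$, the degree-$a$ morphism $x\colon C\to\mathbb{P}^1$ given by a function $x$ with $(x)_\infty=aP$. By Lemma~\ref{KT2017} I may fix $y$ with $(y)_\infty=bP$, $k(C)=k(x,y)$, and $y^a=\prod_{i=1}^b(x-c_i)$ with the $c_i$ pairwise distinct. As $k$ contains the $a$-th roots of unity and $\prod_i(x-c_i)$ is not a $p$-th power in $k(x)$ for any prime $p\mid a$ (the $c_i$ being distinct), the extension $k(C)/k(x)$ is cyclic of degree $a$, generated by the automorphism $\sigma$ with $\sigma(x)=x$ and $\sigma(y)=\zeta y$ for a fixed primitive $a$-th root of unity $\zeta$; this matches the Galois hypothesis built into the GW condition.

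For the ramification I would argue fibrewise. Over $x=\infty$ only $P$ occurs, totally ramified because $(x)_\infty=aP$. Over $x=c_i$ the product $\prod_j(x-c_j)$ has a simple zero, so $y^a$ vanishes to first order and there is a single point $Q_i$ with $v_{Q_i}(x-c_i)=a$, i.e.\ $\Div(x-c_i)=aQ_i-aP$ and $Q_i$ is totally ramified. Over every other finite value the right-hand side is a nonzero constant, producing $a$ distinct unramified points. Hence the ramification points are exactly $P,Q_1,\dots,Q_b$, each totally ramified; as a consistency check Riemann--Hurwitz reads $2g-2=-2a+(b+1)(a-1)$, which agrees with $g=(a-1)(b-1)/2$ from Remark~\ref{genus of <a,b>}. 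From $y^a=\prod_j(x-c_j)$ one also gets $\Div(y)=\sum_{j=1}^b Q_j-bP$, so $v_{Q_i}(y)=1$.

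The heart of the proof is the computation of $H(Q_i)$, for which I would use the $k(x)$-basis $1,y,\dots,y^{a-1}$ of $k(C)$ together with the group action. Write $f=\sum_{l=0}^{a-1}h_l(x)\,y^l$. Because $Q_i$ is totally ramified it is fixed by all of $\la\sigma\ra$, so if $f$ is regular on $C\setminus\{Q_i\}$ then so is every conjugate $\sigma^k f$, and therefore so is each eigencomponent $h_l(x)y^l=\tfrac1a\sum_{k=0}^{a-1}\zeta^{-lk}\sigma^k f$. Thus $f$ has poles only at $Q_i$ if and only if each summand $h_l(x)y^l$ does, and since $v_{Q_i}(h_l y^l)=a\,v_{c_i}(h_l)+l\equiv l\pmod{a}$ the nonzero summands have valuations in distinct residue classes, giving $v_{Q_i}(f)=\min_l v_{Q_i}(h_l y^l)$. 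This reduces the problem to $a$ independent one-parameter problems: for each $l$ find the least pole order at $Q_i$ of a function $h(x)y^l$ regular away from $Q_i$. Regularity at the remaining $Q_j$ and at the unramified fibres forces $h$ to have poles only at $x=c_i$, so I take $h=(x-c_i)^{-m}$, for which the pole order at $Q_i$ is $am-l$ while regularity at $P$ (where $v_P(y^l)=-bl$) imposes $am\ge bl$. The minimum is attained at $m=[bl/a]+1$ (as $\gcd(a,b)=1$ forces $a\nmid bl$ for $1\le l\le a-1$), giving pole order $\rho_l:=a([bl/a]+1)-l=a[bl/a]+a-l$; larger $m$ yield $\rho_l+a\mathbb{Z}_{\ge 0}$ and nothing smaller in the class $-l\bmod a$ is attainable. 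With the case $l=0$ contributing the generator $a$, I conclude $H(Q_i)=\la a,\rho_1,\dots,\rho_{a-1}\ra$.

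It remains to match $\la a,\rho_1,\dots,\rho_{a-1}\ra$ with the stated answer. Writing $b+1=aq+\bar r$ with $q=[(b+1)/a]$, a direct manipulation of floor functions gives $a-\bar r+1=a(q+1)-b$ and hence $[l(a-\bar r+1)/a]=l(q+1)-[lb/a]-1$, from which $l(b+a-\bar r)-[l(a-\bar r+1)/a]\,a=a[lb/a]+a-l=\rho_l$; so in the case $0<\bar r<a$ the two generating sets coincide. In the remaining case $a\mid b+1$, i.e.\ $b\equiv-1\pmod{a}$, the same floor computation collapses $\rho_l$ to $bl$, whence $\la a,\rho_1,\dots,\rho_{a-1}\ra=\la a,b\ra$, the asserted value. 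I expect the main obstacle to be the eigenspace pole-order analysis: one must justify cleanly that $h$ can be taken with poles only at $c_i$, that the bound at $P$ really forces $m\ge[bl/a]+1$, and that no cancellation across the residue classes mod $a$ lowers the pole order — the group-averaging identity is what makes this rigorous rather than heuristic. By comparison the closing identification of the generators, and the split into the two congruence cases, is a routine floor-function computation.
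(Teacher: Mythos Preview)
Your argument is correct. It parallels the paper's proof in setting up the explicit Kummer model from Lemma~\ref{KT2017}, reading off the ramification, and producing for each $l$ the function $(x-c_i)^{-([bl/a]+1)}y^l$ with a pole only at $Q_i$; a short floor computation (exactly the one you do) shows this is the same function the paper writes as $\tilde{y}_i^{\,l}/\tilde{x}_i^{\,N_l}$, so the exhibited generators coincide. The difference lies in how you show these generators already give the full semigroup $H(Q_i)$: the paper proves only the inclusion $H\subset H(Q_i)$ by construction and then closes the argument by counting that the putative gap set has exactly $g=(a-1)(b-1)/2$ elements, forcing equality. You instead use the cyclic action to project any $f$ regular off $Q_i$ onto its $\zeta^l$-eigencomponents $h_l(x)y^l$ and argue that the pole orders lie in distinct residue classes mod $a$, so the minimal attainable pole in each class is exactly $\rho_l$. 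This gives both inclusions at once without invoking the genus, at the cost of the short averaging argument; the paper's route trades that for a gap count via Lemma~\ref{three inequalities}. Either way the final identification with the stated generating set (and the collapse to $\langle a,b\rangle$ when $b\equiv -1\pmod a$) is the same floor-function identity.
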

\begin{proof}
By Lemma~\ref{KT2017}, we have $x, y \in k(C)$ such that $(x)_{\infty}=aP$, $(y)_{\infty}=bP$, $k(C)=k(x,y)$ and $y^a=\prod_{i=1}^{b}(x-c_i)$, where each $c_i$ is a distinct element of $k$. Then, we may assume that the map $\Phi_{|aP|}$ is given by the function $x$ and the ramification point $Q_i$ holds $x(Q_i)=c_i$ and $y(Q_i)=0$. Let $m$ and $\bar{r}$ be the integers such that $b+1=ma+\bar{r}$ and $0 < \bar{r} \leq a$. Let $\tilde{x}_i: = 1/(x-c_i)$ and $\tilde{y}_i:=\tilde{x}_i^{m+1} y $. Since $\Div(x-c_i) = aQ_i-aP$ and  $\Div(y) = Q_1 + \dots + Q_b - bP$, we have that $\Div(\tilde{x}_i) = aP - a Q_i$ and $\Div(\tilde{y}) = Q_1+ \dots + Q_b -bP - (m+1) (aQ_i-aP) = (Q_1 + \dots + Q_b -Q_i) + (a-\bar{r}+1)P - (a(m+1)-1)Q_i$. Moreover, for $l = 1, \dots, a-1$ and  $N_l:=[l(a-\bar{r}+1)/a]$, we have that $(\tilde{y}^l/\tilde{x}^{N_l})_{\infty} = (l(b+a-\bar{r})-N_la)Q_i$. Let  
$$S:= \{ a \} \cup \{ l(b+a-\bar{r}) - [l(a-\bar{r}+1)/a]a \mid l=1, \dots, a-1 \}, $$
and $H:=\la S \ra$. Then, now we have $H \subset H(Q_i)$. 

Let  
\begin{multline*}
  G:= \{ m_{l,n} \mid m_{l,n} = l(b+a-\bar{r}) - [l(a-\bar{r}+1)/a]a -na >0, \\ l \in \{1, \dots, a-1\},  n \in \mathbb{N}\}.  
\end{multline*}
Then, $G \supset \mathbb{Z}_{\geq 0} \setminus H \supset \mathbb{Z}_{\geq 0} \setminus H(Q_i)$. By Lemma~\ref{three inequalities}, the number of elements of $G$ is equal to 
$$\sum_{l=1}^{a-1} \left[ l(b+a-\bar{r})/a \right] - \sum_{l=1}^{a-1} \left[ l(a-\bar{r}+1)/a \right] = (a-1)(b-1)/2.$$
The number of elements of $\mathbb{Z}_{\geq 0} \setminus H(Q_i)$ is equal to the genus of $C$, i.e., $g(C)=(a-1)(b-1)/2$ by Remark~\ref{genus of <a,b>}. Hence, $G = \mathbb{Z}_{\geq 0} \setminus H = \mathbb{Z}_{\geq 0} \setminus H(Q_i)$, so $H(Q_i)=H$.

If $\bar{r}=a$, then, we have $H=\la a, b\ra$.
\end{proof}

\begin{remark} \label{Remark standard basis}
For a numerical $a$-semigroup $H$, i.e., $H$ is a submonoid of $(\mathbb{Z}_{\geq 0}, +)$ such that $\#(\mathbb{Z}_{\geq 0} \setminus H) < \infty$ and $a$ is the smallest positive integer in $H$, the subset $\{ a, s_1, \dots, s_{a-1}\}$, where $s_i = \min \{t \in H \mid t \equiv i \pmod{a} \}$, is referred to as the standard basis for $H$ (Section~2 in \cite{RimVitulli1977}).  
In the proof of Lemma~\ref{Ramification Points},  $S$ is the standard basis for $H(Q_i)$. The standard basis for $H(P)=\la a,b \ra$ is $\{a, b, 2b, \dots, (a-1)b \}$. We see that $b \equiv a-1 \pmod{a}$ (i.e., $\bar{r}=a$) if and only if $H(Q_i) = \la a, b \ra$.
\end{remark}

\section{Proof of Theorem~\ref{Main Theorem}} \label{Section Proof of Main Theorem}

In this section we prove Theorem~\ref{Main Theorem}. 

\subsection{The case that $a+2 \leq b \leq 2a-2$} 

We prove Theorem~\ref{Main Theorem} for the case that $a+2 \leq b \leq 2a-2$. In this subsection, we assume that $a \geq 5$, $2 \leq r \leq a-2$, $\gcd (a,r)=1$ and $b=a+r$.

\begin{lemma} \label{weight1}
Let $H:= \la a, b \ra$. Then, its Weierstrass weight is equal to the value $w_1$ which is stated in Proposition~\ref{InequalityOfTotalWeight}.  
\end{lemma}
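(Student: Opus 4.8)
The plan is to compute the Weierstrass weight of $H=\la a,b\ra$ directly from its gap set and to match the outcome, term by term, against $w_1$. Recall that if a numerical semigroup of genus $g$ has gaps $\gamma_1<\gamma_2<\dots<\gamma_g$, its Weierstrass weight is $\sum_{i=1}^{g}(\gamma_i-i)=\left(\sum_{i=1}^{g}\gamma_i\right)-\frac{g(g+1)}{2}$. Since the subtracted term $\frac{g(g+1)}{2}$ already appears in the definition of $w_1$ (with $g=(a-1)(b-1)/2=(a-1)(a+r-1)/2$ by Remark~\ref{genus of <a,b>}), it suffices to prove that the sum of all gaps of $\la a,b\ra$ equals the first two sums in $w_1$.

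First I would partition the gaps by residue class modulo $a$. By Remark~\ref{genus of <a,b>} the gaps are exactly the positive integers $lb-na$ with $l\in\{1,\dots,a-1\}$ and $n\geq 1$; since $\gcd(a,b)=1$, the numbers $lb$ run through distinct nonzero residues mod $a$, so this genuinely partitions the gap set, and for each fixed $l$ the gaps in that class are $lb-a,\,lb-2a,\dots,lb-M_la$, where $M_l:=[lb/a]$ counts them. Writing $b=a+r$ then gives the two identities I will use repeatedly, namely $M_l=l+[lr/a]$ and $lb=M_la+\rho_l$, where $\rho_l:=lr-[lr/a]a=\varphi(l)$ is the remainder from Lemma~\ref{bijection}.

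Next I would sum the gaps within one residue class. Using $M_l\cdot lb=M_l^2a+M_l\rho_l$ together with the elementary identity $M_l^2-\frac{M_l(M_l+1)}{2}=\frac{M_l(M_l-1)}{2}$, one obtains
\[
\sum_{n=1}^{M_l}(lb-na)=M_l\cdot lb-a\frac{M_l(M_l+1)}{2}=\frac{a}{2}M_l(M_l-1)+\rho_lM_l.
\]
Substituting $M_l=l+[lr/a]$ and $\rho_l=lr-[lr/a]a$ and summing over $l=1,\dots,a-1$ recovers precisely
\[
\frac{a}{2}\sum_{l=1}^{a-1}\left(l+\left[\frac{lr}{a}\right]-1\right)\left(l+\left[\frac{lr}{a}\right]\right)+\sum_{l=1}^{a-1}\left(lr-\left[\frac{lr}{a}\right]a\right)\left(l+\left[\frac{lr}{a}\right]\right),
\]
which are exactly the first two terms of $w_1$. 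Subtracting $\frac{g(g+1)}{2}$ then yields that the weight equals $w_1$.

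There is essentially no serious obstacle here: the argument is a bookkeeping computation whose only delicate point is the reindexing. I must confirm that the weight-index $l$ may be identified with the residue-class index of Remark~\ref{genus of <a,b>}, i.e. that $[lb/a]=l+[lr/a]$ and that the residue of $lb$ modulo $a$ is $\rho_l=\varphi(l)$, so that the per-class gap sum matches the $l$-th summand of $w_1$ verbatim. Once that identification is in place, everything reduces to the identity for $\frac{M_l(M_l-1)}{2}$ and a termwise comparison of finite sums.
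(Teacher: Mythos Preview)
Your proof is correct and follows essentially the same approach as the paper: both describe the gaps of $\la a,b\ra$ as $\{lb-na\}$ with $l\in\{1,\dots,a-1\}$ and $1\le n\le M_l=[lb/a]=l+[lr/a]$, sum them by fixing $l$ first, and identify the result with the two sums defining $w_1$. The only cosmetic difference is that the paper rewrites each gap as $(lr-[lr/a]a)+(M_l-n)a$ and then reindexes the inner sum, whereas you apply the arithmetic-series formula directly; both routes produce $\frac{a}{2}M_l(M_l-1)+\rho_lM_l$ immediately.
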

\begin{proof}
The standard basis for $H$ is $\{ a, a+r, 2(a+r), \dots, (a-1)(a+r) \}$. Hence, the gap sequence of $H$ is the following.
$$\{ \, l(a+r)-na \mid l \in \{1,\dots, a-1 \}, \, n \in \mathbb{N}, \, l(a+r)-na >0 \, \}.$$
Hence, the weight of the gap sequence is the following.
\begin{eqnarray*}
    &&\sum_{l=1}^{a-1} \sum_{n=1}^{\left[ \frac{l(a+r)}{a}\right]} (l(a+r)-na) -\frac{g(g+1)}{2} \\
    &=&\sum_{l=1}^{a-1} \sum_{n=1}^{\left[ \frac{l(a+r)}{a}\right]} \left( lr-\left[\frac{lr}{a}\right]a + \left( \left[ \frac{l(a+r)}{a}\right]  -n\right) a\right)  -\frac{g(g+1)}{2} \\
    &=&\sum_{l=1}^{a-1} \sum_{n=1}^{\left[ \frac{l(a+r)}{a}\right]} \left( lr-\left[\frac{lr}{a}\right]a \right) + \sum_{l=1}^{a-1} \sum_{n=0}^{\left[ \frac{l(a+r)}{a}\right]-1} \left( n a\right)  -\frac{g(g+1)}{2} \, = \, w_1.
\end{eqnarray*}
\end{proof}

Remark that $b+1= a+r+1 \equiv r+1 \pmod{a}$. Let $\bar{r}:=r+1$. 

\begin{lemma} \label{weight2}
Let $H:=\la \{ a \} \cup \{ l(b+a-\bar{r}) - [l(a-\bar{r}+1)/a]a \mid l=1, \dots, a-1 \} \ra $. 
Then, its Weierstrass weight is equal to the value $w_2$ which is stated in Proposition~\ref{InequalityOfTotalWeight}. 
\end{lemma}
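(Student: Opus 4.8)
The plan is to compute the Weierstrass weight of $H$ directly from its gap sequence, in exactly the manner used in the proof of Lemma~\ref{weight1}, and then to match the resulting expression with $w_2$ by an elementary algebraic identity. The first step is to substitute the present values $b=a+r$ and $\bar r=r+1$ into the standard basis of $H$ to obtain workable closed forms. Here $b+a-\bar r=2a-1$ and $a-\bar r+1=a-r$, so the standard-basis element associated with the index $l$ is $s_l=l(2a-1)-[\,l(a-r)/a\,]\,a$ for $l=1,\dots,a-1$.

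Next I would evaluate the floor. Since $\gcd(a,r)=1$ and $1\le l\le a-1$, the number $lr/a$ is never an integer, so $0<lr/a-[lr/a]<1$ and hence $[\,l(a-r)/a\,]=[\,l-lr/a\,]=l-[lr/a]-1$. Substituting this back yields the clean form $s_l=a\bigl(l+[lr/a]+1\bigr)-l$, so that $s_l\equiv a-l\pmod a$. Thus as $l$ runs over $1,\dots,a-1$ the $s_l$ represent all nonzero residue classes modulo $a$, and by Remark~\ref{Remark standard basis} each $s_l$ is the least element of $H$ in its class. Writing $M_l:=l+[lr/a]$, the gaps of $H$ lying in the class of $s_l$ are then $s_l-na$ for $n=1,\dots,[s_l/a]$, and because $0<l/a<1$ one gets $[s_l/a]=M_l$. (As a consistency check, $\sum_{l=1}^{a-1}M_l=(a-1)(a+r-1)/2=g$ by Lemma~\ref{three inequalities}, so the gaps are exhausted exactly.)

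With the gaps in hand I would sum them class by class. Writing $s_l-na=a(M_l+1-n)-l$ and letting $n$ range from $1$ to $M_l$, the contribution of the class of $s_l$ is $\sum_{n=1}^{M_l}\bigl(a(M_l+1-n)-l\bigr)=\tfrac a2 M_l(M_l+1)-lM_l$. Summing over $l$ gives the total sum of gaps, and subtracting $g(g+1)/2$ produces the weight of $H$. It then remains to verify the identity $\tfrac a2\sum_l M_l(M_l+1)-\sum_l lM_l=\tfrac a2\sum_l(M_l-1)M_l+\sum_l(a-l)M_l$, whose right-hand side is precisely the part of $w_2$ preceding $-g(g+1)/2$. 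This identity collapses upon noting that the difference of the two sides equals $\tfrac a2\sum_l 2M_l-\sum_l aM_l=a\sum_l M_l-a\sum_l M_l=0$, using $M_l(M_l+1)-(M_l-1)M_l=2M_l$ and $lM_l+(a-l)M_l=aM_l$.

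The only genuinely delicate point is the bookkeeping in the second step: one must be sure that $\gcd(a,r)=1$ forces $lr/a\notin\mathbb Z$, so that the floor identity $[\,l(a-r)/a\,]=l-[lr/a]-1$ is exact and the gap count $[s_l/a]$ equals $M_l$ rather than $M_l\pm1$. Once the closed form of $s_l$ and the count $M_l$ are pinned down, the remaining summation is the same one already performed in Lemma~\ref{weight1}, the single net effect being that the factor $lr-[lr/a]a$ occurring in $w_1$ is replaced by $a-l$ in $w_2$; this is consistent with the comparison recorded in Lemma~\ref{w_1 >= w_2}.
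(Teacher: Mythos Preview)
Your proof is correct and follows essentially the same approach as the paper: simplify the standard-basis element $s_l$ to the form $(a-l)+la+[lr/a]a$, read off the gap sequence, and then sum the gaps as in Lemma~\ref{weight1}. The paper is terser---it asserts the simplified form of $s_l$ without justifying the floor identity and then defers the remaining summation to ``a calculation similar to that in the proof of Lemma~\ref{weight1}''---whereas you spell out both the evaluation of $[l(a-r)/a]$ via the non-integrality of $lr/a$ and the algebraic identity matching your sum to $w_2$.
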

\begin{proof}
We have that $l(b+a-\bar{r}) - [l(a-\bar{r}+1)/a]a = l(2a-1) - [l(a-r)/a]a = (a-l) + la+[lr/a]a$. 
By Remark~\ref{Remark standard basis}, the gap sequence of $H$ is
$$ \{ \, m_{l,n} \mid  m_{l,n}=(a-l) + la+[lr/a]a - na >0 , \, l \in \{ 1,2, \dots, a-1 \}, \, n \in \mathbb{N} \, \}.$$
By the calculation similar to that in the proof of Lemma~\ref{weight1}, we have that the weight of the gap sequence equals $w_2$. 
\end{proof}

Let $\delta:=\# GW_{\la a, b \ra}(C)$.

\begin{lemma} \label{rough bound} 
$\delta = 0,1,a+1$ or $2a+1$. 
\end{lemma}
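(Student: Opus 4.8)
The plan is to combine a total Weierstrass weight estimate (via Proposition~\ref{InequalityOfTotalWeight}) with a group-action argument. If $\delta=0$ there is nothing to prove, so assume $\delta\ge 1$ and fix $P\in GW_{\la a,b\ra}(C)$. Because $P$ is totally ramified under the degree-$a$ Galois covering $\Phi_{|aP|}$, the inertia group at $P$ is the whole Galois group and is cyclic; write $\la\sigma\ra$ for it, an order-$a$ cyclic subgroup of $\Aut(C)$ with $C/\la\sigma\ra\cong\mathbb{P}^1$ and ramification set $R_P:=\{P,Q_1,\dots,Q_b\}$ as in Lemma~\ref{Ramification Points}. Since $2\le r\le a-2$ forces $b\not\equiv a-1\pmod a$, Remark~\ref{Remark standard basis} gives $H(Q_i)\ne\la a,b\ra$, so $P$ is the unique point of $R_P$ lying in $GW_{\la a,b\ra}(C)$; moreover $P$ has Weierstrass weight $w_1$ and each $Q_i$ has weight $w_2$ by Lemmas~\ref{weight1} and~\ref{weight2}.

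First I would show that distinct GW points have disjoint ramification sets. In characteristic $0$ the stabilizer $\Aut(C)_Q$ of a point $Q$ is cyclic, since it acts faithfully on the one-dimensional cotangent space at $Q$. If two points $P,P'\in GW_{\la a,b\ra}(C)$ shared a point $Q\in R_P\cap R_{P'}$, then $Q$ would be totally ramified for both coverings, so the two order-$a$ Galois groups would both lie in the cyclic group $\Aut(C)_Q$; as a cyclic group has a unique subgroup of order $a$, the two Galois groups coincide, whence $\Phi_{|aP|}$ and $\Phi_{|aP'|}$ are the same covering and $R_P=R_{P'}$. Each such set contains a single $\la a,b\ra$-point, so $P=P'$. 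Consequently the sets $R_P$ ($P\in GW_{\la a,b\ra}(C)$) are pairwise disjoint, each contributing one point of weight $w_1$ and $a+r$ points of weight $w_2$. Comparing with the total Weierstrass weight $(g-1)g(g+1)$ of $C$ and invoking Proposition~\ref{InequalityOfTotalWeight} yields
\[
\delta\,(w_1+(a+r)w_2)\le (g-1)g(g+1)<(3a+1)(w_1+(a+r)w_2),
\]
and since $w_1+(a+r)w_2>0$ this gives $\delta\le 3a$.

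Next I would analyze the action of $\la\sigma\ra$ on $GW_{\la a,b\ra}(C)$, which is well defined because automorphisms preserve both $H(\cdot)$ and the GW property. The point $P$ is fixed; I claim no nontrivial power of $\sigma$ fixes any other GW point, so that $\la\sigma\ra$ acts freely on $GW_{\la a,b\ra}(C)\setminus\{P\}$. Indeed, suppose $\tau=\sigma^j$ with $0<j<a$ fixes some $P'\in GW_{\la a,b\ra}(C)$. Then $\tau$ lies in the cyclic group $\Aut(C)_{P'}$, which contains the order-$a$ Galois group of $\Phi_{|aP'|}$; uniqueness of subgroups in a cyclic group places $\tau$ inside that Galois group. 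As $\tau$ is a nontrivial element of $\Gal(\Phi_{|aP'|})$ that also fixes $P$ (being a power of $\sigma$), the point $P$ must be a ramification point of $\Phi_{|aP'|}$, i.e.\ $P\in R_{P'}$; since $H(Q_i)\ne\la a,b\ra$, the only $\la a,b\ra$-point of $R_{P'}$ is $P'$, forcing $P=P'$. Hence every $\la\sigma\ra$-orbit in $GW_{\la a,b\ra}(C)\setminus\{P\}$ has size exactly $a$, so $a\mid(\delta-1)$, i.e.\ $\delta\equiv 1\pmod a$. Together with $0\le\delta\le 3a$ this leaves only $\delta\in\{0,1,a+1,2a+1\}$.

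The main obstacle is the freeness (and disjointness) step when $a$ is composite: a priori a $\la\sigma\ra$-orbit could have size a proper divisor of $a$, and ruling this out is exactly where the cyclic-stabilizer fact, the uniqueness of a subgroup of given order in a cyclic group, and the hypothesis $b\not\equiv a-1\pmod a$ (through $H(Q_i)\ne\la a,b\ra$) must be used in tandem. Once these structural facts are in place, the remaining weight count is routine.
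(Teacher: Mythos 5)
Your proof is correct and follows essentially the same route as the paper's: the two ingredients in both are the $\la\sigma\ra$-orbit argument forcing $\delta\equiv 1\pmod{a}$ when $\delta\geq 1$ (the paper phrases this as repeatedly producing $a$ new GW points from any GW point outside the orbits already found) and the upper bound $\delta\leq 3a$ obtained by comparing the accumulated weight $\delta\,(w_1+(a+r)w_2)$ with the total Weierstrass weight $(g-1)g(g+1)$ via Proposition~\ref{InequalityOfTotalWeight}. Your write-up is in fact slightly more careful than the paper's, which leaves implicit both the pairwise disjointness of the ramification sets (needed for the weight count) and the freeness of the $\la\sigma\ra$-action on $GW_{\la a,b\ra}(C)\setminus\{P\}$; your cyclic-stabilizer argument supplies both.
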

\begin{proof}
Assume that there exist two GW points $P$ and $Q$ with $H(P)=H(Q)=\la a, b \ra$. By Lemma~\ref{Ramification Points} and Remark~\ref{Remark standard basis}, the map $\Phi_{|aP|}$ is a cyclic covering of degree $a$ and is not ramified at $Q$. Let $\sigma \in \Aut(C)$ be a generator of the cyclic group $\Gal (\Phi_{|aP|}) := \{ \tau \in \Aut (C) \mid \Phi_{|aP|} \circ \tau = \Phi_{|aP|} \}$. Then, we can find $a+1$ GW points with the Weierstrass semigroup $\la a,b \ra$, that is, these are $P$ and $\sigma^i(Q)$ ($0 \leq i \leq a-1$). 

Assume that there exist $a+2$ GW points $P$, $\sigma^i(Q)$ ($0 \leq i \leq a-1$) and $R$ with the Weierstrass semigroup $\la a,b \ra$. By the same argument as above, we can find $2a+1$ GW points with the Weierstrass semigroup $\la a,b \ra$. 

Assume that $\delta \geq 2a+2$. By the same argument as above, we see that $\delta \geq 3a+1$. By Lemmas~\ref{weight1} and \ref{Ramification Points}, the cyclic covering with respect to each GW point has $b+1$ total ramification points, and the sum of weights of these $b+1$ points is equal to $w_1+ b w_2$. Since the total weights of Weierstrass points is equal to $(g-1)g(g-1)$ (Proposition III.5.10 in \cite{FarkasKra1992}), we have that $(3a+1)(w_1+b w_2) \leq (g-1)g(g-1)$. This contradicts to Proposition~\ref{InequalityOfTotalWeight}. 
\end{proof}

Here we prove Theorem~\ref{Main Theorem} for the case that $a+2 \leq b \leq 2a-2$, that is, we show the following Lemma.

\begin{lemma}
$\delta = 0$ or $1$. 
\end{lemma}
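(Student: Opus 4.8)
The plan is to suppose $\delta\ge 2$ and derive a contradiction; by Lemma~\ref{rough bound} this is exactly what is needed to exclude the values $a+1$ and $2a+1$ and leave only $\delta=0,1$. The structural input I would use at every step is that, since $2\le r\le a-2$, we have $b=a+r\not\equiv a-1\pmod a$, so by Remark~\ref{Remark standard basis} the Weierstrass semigroup of every ramification point of a covering $\Phi_{|aR|}$ (for a GW point $R$) differs from $\la a,b\ra$. Hence no GW point with semigroup $\la a,b\ra$ can be a ramification point of such a covering: the GW points are unramified, occur in full fibres of size $a$, and are permuted freely by the relevant Galois group.

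Fix distinct GW points $P,Q$ with $H(P)=H(Q)=\la a,b\ra$ and put $x:=\Phi_{|aP|}$, $u:=\Phi_{|aQ|}$, cyclic coverings of $\mathbb{P}^1$ of degree $a$ with Galois generators $\sigma$ (fixing $P$) and $\tau$ (fixing $Q$). I would study the product morphism $(x,u)\colon C\to\mathbb{P}^1\times\mathbb{P}^1$ and its image $D$. First I would prove $k(x,u)=k(C)$, so that $C\to D$ is birational: since $k(C)/k(x)$ is Galois with group $\la\sigma\ra$, the field $k(x,u)$ equals $k(C)^{H}$ for some $H\le\la\sigma\ra$, and $u$ is then $H$-invariant; but any nontrivial $\phi\in H$ would permute the pole set of $u$ and hence fix $Q$, contradicting the free action of $\sigma$ on the GW points. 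With birationality in hand $D$ has bidegree $(a,a)$, so its arithmetic genus is $(a-1)^2$, and by Remark~\ref{genus of <a,b>} its total $\delta$-invariant is
$$\Delta(D)=(a-1)^2-g=\frac{(a-1)(a-r-1)}{2},\qquad g=\frac{(a-1)(a+r-1)}{2}.$$

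The core of the argument is a coincidence count showing that the GW points alone force $\Delta(D)$ to be larger than this. Since the GW points are unramified for both $x$ and $u$, each gives a smooth branch of $D$, so if $m$ distinct GW points share the same value of $(x,u)$ their common image is a point of $D$ of multiplicity $\ge m$, contributing at least $\binom{m}{2}$ to $\Delta(D)$. Now $x(P)=\infty$ while every other GW point has finite $x$-value lying in one of the $(\delta-1)/a$ full GW fibres of $x$, and dually for $u$ and $Q$. When $\delta=a+1$ there is a single such fibre, and because the $u$-fibre through $P$ consists of all $a$ GW points other than $Q$, the points $\sigma(Q),\dots,\sigma^{a-1}(Q)$ all map to the single point $(x(Q),u(P))$; hence $\Delta(D)\ge\binom{a-1}{2}=(a-1)(a-2)/2>(a-1)(a-r-1)/2$ because $r\ge2$, a contradiction. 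When $\delta=2a+1$ there are two GW fibres of $x$ and two of $u$; classifying the $2a$ GW points other than $Q$ by their pair (finite $x$-value, $u$-value) gives a $2\times2$ incidence table whose margins are fixed by the orbit sizes and by the positions of $P,Q$ over the points at infinity, and minimising $\sum\binom{m}{2}$ over the admissible entries again yields a value strictly exceeding $(a-1)(a-r-1)/2$. Either way we reach a contradiction, so $\delta\le1$.

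The step I expect to be most delicate is the $\delta=2a+1$ coincidence count: one must read off the marginals of the incidence table from the fact that $\sigma$ and $\tau$ act with free orbits of size $a$ while $P$ and $Q$ lie over $x=\infty$ and $u=\infty$, and then check that the minimum of $\sum\binom{m}{2}$ subject to these constraints really beats $(a-1)(a-r-1)/2$ for every admissible distribution (a short optimisation shows the minimum is about $a^2/2-a$, comfortably above $\Delta(D)$). The birationality step, though clean via Galois theory, is the other place where care is needed.
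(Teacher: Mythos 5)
Your argument is correct, but it takes a genuinely different route from the paper's. The paper disposes of both cases $\delta=a+1$ and $\delta=2a+1$ by a two-line linear-equivalence argument: writing the GW points as $P_0,\dots,P_a$ (resp.\ $P_0,\dots,P_{2a}$), the fibre structure from Lemma~\ref{rough bound} gives $aP_0\sim\sum_{i=1}^{a}P_i$ and $aP_1\sim P_0+\sum_{i=2}^{a}P_i$, hence $(a+1)P_0\sim(a+1)P_1$, which would force $a+1\in H(P_0)$; but $a+1$ is a gap of $\la a,a+r\ra$ because $r\geq 2$ (and likewise $2a+1$ is a gap in the second case). Your proof instead maps $C$ to $\mathbb{P}^1\times\mathbb{P}^1$ by the two coverings, proves birationality onto a curve of bidegree $(a,a)$, and compares the $\delta$-invariant budget $(a-1)^2-g=(a-1)(a-r-1)/2$ with the coincidences forced among the GW points; this is close in spirit to the paper's own proof of Lemma~\ref{b=2a-1} rather than of this statement. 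I checked the two steps you flagged as delicate and they do go through: birationality follows from the Galois correspondence exactly as you say, and in the $\delta=2a+1$ case the incidence table has row and column sums $(a-1,a)$, is determined by one parameter $s=n_{11}$ with $\sum\binom{n_{ij}}{2}=2s^2-(2a-3)s+(a-1)(a-2)$, whose minimum is about $a^2/2-3a/2$ (not $a^2/2-a$), still strictly above $\Delta(D)\leq(a-1)(a-3)/2\approx a^2/2-2a$, though the margin is only about $a/2$ rather than ``comfortable.'' What the paper's route buys is brevity and the avoidance of any singularity theory; what yours buys is a more geometric picture (and it localizes exactly where $r\geq 2$ is used, namely in $\binom{a-1}{2}>(a-1)(a-r-1)/2$), at the cost of having to justify the branch-counting inequality $\delta_p\geq\binom{m}{2}$ and to carry out the optimisation explicitly.
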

\begin{proof}
If $\delta > 1$, then $\delta = a+1$ or $2a+1$ by Lemma~\ref{rough bound}.

Assume that $\delta = a+1$. Let $P_i$ ($0 \leq i \leq a$) be these $a+1$ GW points. Then, by the argument in the proof of Lemma~\ref{rough bound}, we have $aP_0 \sim \sum_{i=1}^{a} P_i$, where $\sim$ means linear equivalence. We also have $aP_1 \sim P_0 + \sum_{i=2}^{a} P_i$. Hence, $(a+1)P_0 \sim \sum_{i=0}^{a} P_i \sim (a+1) P_1$. However, $a+1$ is a gap of $H(P_0)=\la a,a+r \ra$. This is a contradiction. 

Assume that $\delta = 2a+1$. Let $P_i$ ($0 \leq i \leq 2a$) be these $2a+1$ GW points. Then, by the argument in the proof of Lemma~\ref{rough bound}, we may assume $aP_0 \sim \sum_{i=1}^{a} P_i \sim \sum_{i=a+1}^{2a} P_i$. Hence, $2aP_0 \sim \sum_{i=1}^{2a} P_i$. We also have $2aP_1 \sim P_0 + \sum_{i=2}^{2a} P_i$. Hence, $(2a+1) P_0 \sim \sum_{i=0}^{2a} P_i \sim (2a+1) P_1$. However, $2a+1$ is a gap of $H(P_0)=\la a,a+r \ra$. This is a contradiction.
\end{proof}

\subsection{Proof of Theorem~\ref{Main Theorem} for the case that $2a-1 \leq b$} \label{Section Proof 2a-2 < b} 

We prove Theorem~\ref{Main Theorem} for the case that $2a-1 \leq b$. 

\begin{lemma} \label{uniq}
Let $P$ be a GW point on a curve $C$ with $H(P)=\la a,b\ra$, where $3\leqq a$, $2a<b$ and $\gcd (a,b)=1$. Then, $|aP|$ is only one base-point free linear system on $C$ with projective dimension $1$ and degree $a$.
\end{lemma}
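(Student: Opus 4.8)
The plan is to combine the classical Castelnuovo--Severi inequality with the semigroup computations in Lemmas~\ref{total} and~\ref{gen2}. First I record the relevant structure of $|aP|$. Since $P$ is a GW point, the morphism $f:=\Phi_{|aP|}\colon C\to\mathbb{P}^1$ is a Galois covering of degree $a$ that is totally ramified at $P$; because the characteristic is $0$, the inertia group at the totally ramified point $P$ is cyclic of order equal to the ramification index $a$, hence equals the whole Galois group. Thus $k(C)/k(x)$ is \emph{cyclic} of order $a$, where $x\in k(C)$ satisfies $(x)_\infty=aP$ and $k(x)=f^{*}k(\mathbb{P}^1)$. Now suppose $\mathfrak{d}$ is any base-point-free linear system of projective dimension $1$ and degree $a$, giving a degree-$a$ morphism $g\colon C\to\mathbb{P}^1$ with pullback field $k(z):=g^{*}k(\mathbb{P}^1)$, so that $[k(C):k(z)]=a$. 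The goal is to prove $k(z)=k(x)$: since the pencil $\mathfrak{d}$ is recovered from $k(z)$ up to $\mathrm{PGL}_2$ and the pencil cut out by $x$ is $|aP|$, this forces $\mathfrak{d}=|aP|$.

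Next I would form the compositum $k(C'):=k(x)\,k(z)\subseteq k(C)$, with associated covering $\pi\colon C\to C'$ and $d:=[k(C):k(C')]$. Because $k(x)\subseteq k(C')\subseteq k(C)$ and $\Gal(k(C)/k(x))$ is cyclic, $C'$ is an intermediate curve with $d\mid a$, the covering $\pi$ is cyclic of degree $d$, and $\pi$ is totally ramified at $P$ (ramification indices multiply in the tower and $P$ is totally ramified over $\mathbb{P}^1$, so $e(P/\pi(P))=d$). Writing $P'=\pi(P)$ and $e:=a/d$, Lemmas~\ref{total} and~\ref{gen2} give $H(P')=\{h'\mid dh'\in\la a,b\ra\}=\la e,b\ra$, whence by Remark~\ref{genus of <a,b>} the genus of $C'$ is $g(C')=(e-1)(b-1)/2$ (for $d=1$ this reads $C'=C$ and $g(C')=(a-1)(b-1)/2$). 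On the other hand $C'$ carries the two induced degree-$e$ morphisms to $\mathbb{P}^1$ coming from $x$ and $z$, whose pullback fields $k(x)$ and $k(z)$ generate $k(C')$ by construction; hence the Castelnuovo--Severi inequality applies to these two maps and yields $g(C')\le(e-1)^2$.

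Comparing the two expressions, if $e\ge 2$ then dividing $(e-1)(b-1)/2\le(e-1)^2$ by $e-1$ gives $(b-1)/2\le e-1\le a-1$, i.e.\ $b\le 2a$, contradicting the hypothesis $2a<b$. Therefore $e=1$, that is $d=a$ and $k(C')=k(x)$; since $k(z)\subseteq k(C')=k(x)$ with $[k(C):k(z)]=a=[k(C):k(x)]$, we conclude $k(z)=k(x)$, so $\mathfrak{d}=|aP|$, which is the desired uniqueness. The main obstacle is the intermediate range $1<d<a$, where $g$ is neither independent from $f$ (so plain Castelnuovo--Severi on $C$ is not directly decisive) nor equal to it: the argument succeeds only because the cyclicity of $\Phi_{|aP|}$ lets Lemmas~\ref{total} and~\ref{gen2} pin down $g(C')$ exactly as $(e-1)(b-1)/2$, so that Castelnuovo--Severi on $C'$ can be played off against $2a<b$. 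Establishing the cyclic, totally ramified structure of the tower $k(x)\subseteq k(C')\subseteq k(C)$ is thus the crux of the proof.
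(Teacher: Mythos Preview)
Your argument is correct. Both your proof and the paper's rest on the same three ingredients---the cyclicity of $\Phi_{|aP|}$, Lemmas~\ref{total} and~\ref{gen2} to identify $H(P')$ and hence $g(C')$, and the Castelnuovo--Severi bound---but the logical organisation differs. The paper uses the \emph{factorisation} form of Castelnuovo's inequality: since $g(C)>(a-1)^2$, two inequivalent $g_a^1$'s must factor through a common cover $\varphi\colon C\to C'$ of degree $t>1$; after computing $H(P')=\la a/t,b\ra$ one again has $g(C')>(a/t-1)^2$, so the process repeats, producing an infinite strictly descending chain of divisors of $a$---a contradiction. You instead pass in one step to the \emph{compositum} curve $C'$ with $k(C')=k(x)k(z)$: there the two induced pencils automatically generate the function field, so a single application of Castelnuovo--Severi gives $g(C')\le(e-1)^2$, which you play off against the exact value $g(C')=(e-1)(b-1)/2$ coming from Lemmas~\ref{total} and~\ref{gen2}. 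Your route is shorter and avoids the descent, at the cost of making explicit the (easy) facts that the intermediate extension $k(C)/k(C')$ is cyclic and totally ramified at $P$; the paper's iterative version hides this inside the repeated invocation of Castelnuovo but needs the extra bookkeeping of the descent.
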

\begin{proof}
We set $g_a^1=|aP|$. Assume that there is another base-point free linear system $h_a^1$ on $C$ with projective dimension $1$ and degree $a$ such that $g_a^1 \not\sim h_a^1$. 
Since $2a<b$, by Remark~\ref{genus of <a,b>}, we have
$$g(C)=\frac{(a-1)(b-1)}{2}\geqq \frac{(a-1)(2a+1-1)}{2}=a(a-1)>(a-1)^2.$$
Hence, by Castelnuovo's inequality, there exists a morphism $\varphi:C\rightarrow C'$ of degree $t$ with $1<t$ and $t|a$ such that　
$g_a^1=g_{a/t}^1 \circ \varphi \mbox{ and } h_a^1=h_{a/t}^1\circ \varphi,$
where $g_{a/t}^1$ and $h_{a/t}^1$ are base-point free linear systems with projective dimension $1$ and degree $a/t$, and for a base-point free linear system $g_s^1$ we also denote by $g_s^1$ the morphism $C\rightarrow \mathbb{P}^1$ defined by $g_s^1$.
Since $P$ is a Galois Weierstrass point, the morphism $g_a^1$ is a cyclic covering. 
Hence, the morphisms $\varphi$ and $\displaystyle g_{a/t}^1$ are also cyclic coverings of degree $t$ and $a/t$ respectively.
We set $P'=\varphi(P)$.
Then $P$ is a total ramification point over $P'$.
By Lemmas~\ref{total} and~\ref{gen2} we obtain
$\displaystyle H(P')=\left\la a/t, b \right\ra$.
We have
$$g(C')=\frac{(a/t-1)(b-1)}{2}\geqq \frac{(a/t-1)(2a+1-1)}{2}=a\left(a/t-1\right)>\left(a/t-1\right)^2.$$
Since $\displaystyle g_{a/t}^1$ is not linearly equivalent to $\displaystyle h_{a/t}^1$,
by Castelnuovo's inequality there exists a morphism $\varphi_1 : C'\rightarrow C''$ of degree $t_1$ with $1 < t_1$ and $\displaystyle t_1 \Big|(a/t)$ such that 
$g_{a/t}^1=g_{a/(t_1t)}^1\circ \varphi_1 \mbox{ and } h_{a/t}^1=h_{a/(t_1t)}^1 \circ \varphi_1$. 
Since $\displaystyle g_{a/t}^1$ is a cyclic coverings, so are the morphisms $\varphi_1$ and $g_{a/(t_1t)}^1$.
We set $P''=\varphi_1(P')$.
Then we get
$\displaystyle H(P'')= \la {a/(t_1t)},b \ra$.
We can continue this process infinitely.
Since $a$ is bounded, this is a contradiction.
\end{proof}

By Remark~\ref{Remark standard basis} and Lemmas~\ref{uniq}, we conclude Theorem~\ref{Main Theorem} for the case that $2a < b$. 

\begin{lemma} \label{b=2a-1}
Let $C$ be a curve and $P$ be a GW point on $C$ with $H(P)=\la a,2a-1\ra$, where $3 \leq a$. If $Q$ is a point on $C$ such that $H(Q)$ is an $a$-semigroup, then the divisor $aQ$ is linearly equivalent to $aP$.
\end{lemma}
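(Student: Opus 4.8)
The plan is to sidestep Castelnuovo's inequality, which powered Lemma~\ref{uniq}: since $g(C)=(a-1)(b-1)/2=(a-1)^2$ here, this is exactly the equality case of Castelnuovo for two degree-$a$ pencils, so the argument of Lemma~\ref{uniq} cannot force a common factorization and does not close. Instead I would work directly with the cyclic structure. By Lemma~\ref{KT2017} choose $x,y\in k(C)$ with $(x)_\infty=aP$, $(y)_\infty=(2a-1)P$, $k(C)=k(x,y)$ and $y^a=\prod_{i=1}^{2a-1}(x-c_i)$ with the $c_i$ distinct. Then $\Phi_{|aP|}$ is the map $x$, a cyclic covering of degree $a$; as $b+1=2a\equiv 0\pmod a$, Lemma~\ref{Ramification Points} gives that its ramification points are precisely $P$ (over $\infty$) and $Q_1,\dots,Q_{2a-1}$ (over the $c_i$), all totally ramified. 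Hence every fibre of $x$ is linearly equivalent to $aP$, and over a total ramification point $R$ the fibre is $aR$. So it suffices to show that $Q$ must be one of these ramification points; then $aQ$ is a fibre of $x$ and $aQ\sim aP$.

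Since $H(Q)$ is an $a$-semigroup there is $f\in k(C)$ with $(f)_\infty=aQ$. Write $f=\sum_{j=0}^{a-1}y^jg_j(x)$ with $g_j\in k(x)$, the expansion in the $k(x)$-basis $1,y,\dots,y^{a-1}$. The first step is a congruence bookkeeping at the ramification points: at $P$ one has $v_P(y^jg_j(x))=-j(2a-1)+a\,v_\infty(g_j)\equiv j\pmod a$, and at each $Q_i$ one has $v_{Q_i}(y^jg_j(x))=j+a\,v_{c_i}(g_j)\equiv j\pmod a$. Thus at $P$ and at every $Q_i$ the $a$ summands of $f$ have pairwise distinct valuations, so no cancellation is possible and each summand must be regular there individually. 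This forces $v_\infty(g_j)\geq\lceil j(2a-1)/a\rceil=2j$ and $v_{c_i}(g_j)\geq 0$ for all $i$; in particular every $g_j$ is regular at $\infty$ and at every $c_i$.

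Now I would suppose for contradiction that $Q$ is not a ramification point, so $c:=x(Q)\notin\{c_1,\dots,c_{2a-1},\infty\}$ and the fibre $x^{-1}(c)=\{Q=R_0,R_1,\dots,R_{a-1}\}$ has $a$ distinct points on which $y$ takes the values $\eta,\zeta\eta,\dots,\zeta^{a-1}\eta$, with $\eta=y(Q)\neq 0$ and $\zeta$ a primitive $a$-th root of unity. For any value $d$ unramified for $x$, evaluating the principal part of $f$ at the $a$ points of $x^{-1}(d)$ and using that the values of $y$ there are $\eta_d\zeta^k$ shows that the vanishing of the pole at each such point is governed by the invertible Vandermonde system $(\zeta^{kj})_{k,j}$ applied to the leading coefficients of the $g_j$ at $d$. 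Requiring $f$ to be regular off $Q$ forces these coefficients to vanish for every $d\neq c$, so each $g_j$ has poles only at $c$; and requiring a pole of order exactly $a$ at $R_0=Q$ together with regularity at $R_1,\dots,R_{a-1}$ forces the leading-coefficient vector to be constant and nonzero, i.e. every $g_j$ has pole order exactly $a$ at $c$. Writing $g_j=B_j(x)/(x-c)^a$ with $B_j(c)\neq 0$, the bound $v_\infty(g_j)=a-\deg B_j\geq 2j$ gives $\deg B_j\leq a-2j$; for $j=a-1$ this reads $\deg B_{a-1}\leq 2-a<0$, impossible for a nonzero polynomial when $a\geq 3$. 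This contradiction shows $Q$ is a total ramification point, and the claim follows.

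I expect the Vandermonde cancellation over the unramified fibres to be the main technical point, since one must control not just leading coefficients but rule out poles over all values $d\neq c$. The congruence-mod-$a$ separation of the summands at $P$ and the $Q_i$ is the other essential ingredient: it is what makes the estimates $v_\infty(g_j)\geq 2j$ available, and these estimates are exactly what the final degree count at $j=a-1$ consumes. Notably, this argument uses only the explicit model of Lemma~\ref{KT2017} and elementary pole analysis, so it avoids the equality-case difficulty of Castelnuovo entirely.
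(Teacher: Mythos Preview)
Your argument is correct and takes a genuinely different route from the paper's. The paper assumes $aQ\not\sim aP$, picks $x,y$ with $(x)_\infty=aP$ and $(y)_\infty=aQ$, disposes of the case $k(x,y)\subsetneq k(C)$ by the iteration mechanism of Lemma~\ref{uniq}, and then embeds $C$ as a smooth bidegree-$(a,a)$ curve in $\mathbb{P}^1\times\mathbb{P}^1$; a K\"unneth computation on this surface yields $h^0(naP)=n+1$ for $n\le a-1$, which forces $H(P)=\{0,a,2a,\dots,(a-1)a\}\cup\{h\ge(a-1)a+1\}$ and contradicts $2a-1\in H(P)$.

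You instead stay inside the explicit cyclic model $y^a=\prod_{i=1}^{2a-1}(x-c_i)$ from Lemma~\ref{KT2017} and analyse the $k(x)$-basis expansion $f=\sum_j y^j g_j(x)$ of a function with $(f)_\infty=aQ$. The mod-$a$ separation of valuations at the totally ramified points gives the key estimate $v_\infty(g_j)\ge 2j$; the Vandermonde system over the unramified fibres first confines the poles of every $g_j$ to $c=x(Q)$ and then, via the one-dimensional kernel of the $(a-1)\times a$ DFT block, forces each $g_j$ to have pole order exactly $a$ at $c$. The degree bound $\deg B_{a-1}\le 2-a<0$ then closes. This is entirely elementary---no sheaf cohomology, no K\"unneth---and it exploits the Galois hypothesis on $P$ in an essential way through the cyclic model, whereas the paper's argument uses the GW structure only to reach $k(C)=k(x,y)$ and is otherwise a general statement about smooth $(a,a)$-curves. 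One minor presentational point: the regularity bounds $v_\infty(g_j)\ge 2j$ and $v_{c_i}(g_j)\ge 0$ already presuppose $Q\notin\{P,Q_1,\dots,Q_{2a-1}\}$, so the contradiction hypothesis should be stated before that step rather than after it.
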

\begin{proof}
Assume that $aQ \not\sim aP$, where $\sim$ means linear equivalence.
Let $x,y\in k(C)$ such that $(x)_{\infty}=aP$ and $(y)_{\infty}=aQ$.
By the assumption, the field $k(x)$ is properly contained in $k(x,y)$.

Let $\varphi: C \rightarrow \mathbb{P}^1\times \mathbb{P}^1$ be the morphism determined by the morphisms $\Phi_{|aP|}:C\rightarrow \mathbb{P}^1$ and $\Phi_{|aQ|}:C\rightarrow \mathbb{P}^1$. Let $C'$ be the nonsingular curve that is birational to the image $\varphi(C)$. We consider the case where $k(x,y)=k(C')$ is properly contained in $k(C)$. By the same way to the proof of Proposition~\ref{uniq}, we get a contradiction. Thus we obtain $k(C)=k(x,y)$. Namely, $C$ is birational to $\varphi(C)$. Since the arithmetic genus of $\varphi(C)$ equals $g(C)=(a-1)^2$, $\varphi(C)$ is nonsingular. Thus, $C$ is embedded in $\mathbb{P}^1\times \mathbb{P}^1$. 

We set $L:=\{\mbox{a point}\}\times \mathbb{P}^1\in {\rm Pic}(\mathbb{P}^1\times \mathbb{P}^1)$ and $M:=\mathbb{P}^1\times \{\mbox{a point}\} \in {\rm Pic}(\mathbb{P}^1\times \mathbb{P}^1)$. 
Then we have $C\sim aL+aM$. Moreover, we obtain $L|_C\sim aP$ and $M|_C\sim aQ$. 
We have an exact sequence 
$0\rightarrow {\mathcal O}_{\mathbb{P}}(-C)\rightarrow {\mathcal O}_{\mathbb{P}}\rightarrow {\mathcal O}_C\rightarrow 0$, where we set $\mathbb{P}=\mathbb{P}^1\times \mathbb{P}^1$.
Hence, we get an exact sequence
$$0\rightarrow {\mathcal O}_{\mathbb{P}}(nL-C)\rightarrow {\mathcal O}_{\mathbb{P}}(nL)\rightarrow {\mathcal O}_C(naP)\rightarrow 0.$$
Since $nL-C\sim nL-aL-aM=-aM+(n-a)L$, by K\"{u}nneth formula, we obtain
$$H^i(nl-C)=H^i(-aM+(n-a)L)=\oplus_{s+t=i}H^s(-a)\otimes H^t(n-a), $$
where for an integer $s$ we set $H^i(s):=H^i({\mathcal O}_{\mathbb{P}^1}(s))$.
Hence, we get
$$H^0(nL-C)=H^0(-a)\otimes H^0(n-a)=0$$
and
$$H^1(nL-C)=(H^1(-a)\otimes H^0(n-a))\oplus (H^0(-a)\otimes H^1(n-a))=H^1(-a)\otimes H^0(n-a),$$
which implies that 
$h^1(nL-C)=h^0(a-2)h^0(n-a)$.
Thus, we have
$h^1(nL-C)=0$ if $n\leqq a-1$, and $h^1(nL-C)=(a-1)(n-a+1)$ if $n\geqq a$.
Therefore, we have an exact sequence 
$0\rightarrow H^0(nL)\rightarrow H^0(C,naP)\rightarrow 0$ 
for $n\leqq a-1$.
Hence, we get
$h^0(naP)=h^0(nL)=n+1$ for $n\leqq a-1$.
Since $g(C) = (a-1)^2$ equals $\#(\mathbb{Z}_{\geq 0} \setminus H(P))$, we obtain
$$H(P)=\{0,a,2a,\ldots,(a-1)a\}\cup\{h\in \mathbb{Z}_{\geq 0}\mid h\geqq (a-1)a+1\},$$
which is distinct from the numerical semigroup $\la a,2a-1\ra$.
This is a contradiction.
\end{proof}

By Remark~\ref{Remark standard basis} and Lemma~\ref{b=2a-1}, we conclude Theorem~\ref{Main Theorem} for the case that $b=2a-1$. Now we complete the proof of Theorem~\ref{Main Theorem}.

\section{Example} \label{Section Example}

We state some examples of Theorem~\ref{Main Theorem}. 

\begin{example} \label{Example <3,5>}
Let $C' \subset \mathbb{P}^2$ be a singular plane curve defined by the equation $Y^3Z^2 = \prod_{j=1}^5 (X - j Z)$, where $(X:Y:Z)$ is a system of homogeneous coordinates of $\mathbb{P}^2$. Then, $C'$ has only one singular point $P':=(0:1:0)$, which is a cusp. Remark that $P'$ is an extendable Galois point for $C'$ (For the definition of an extendable Galois point, see \cite{FukasawaMiura2014}), in particular, $k(C')/\pi_{P'}^*(k(\mathbb{P}^1))$ is Galois. Let $\rho: C \rightarrow C'$ be the resolution of singularities, i.e., the composition of blowups over $P'$, and $P:=\rho^{-1}(P')$. The genus of $C$ equals $g(C)=4$.

We see that $H(P)=\la 3,5\ra$ as follows. Let $x$, $y$ $\in k(C)$ be rational functions induced by $X/Z$ and $Y/Z$, respectively. Then, $(x)_{\infty} = 3P$ and $(y)_{\infty}=5P$. Hence, $H(P) \supset \la 3,5 \ra$. The number of elements of the gap sequence at $P$, i.e., $\# (\mathbb{Z}_{\geq 0} \setminus H(P))$, equals $g(C)=4$ and the number $\# (\mathbb{Z}_{\geq 0} \setminus \la 3,5 \ra)$ also equals $4$. So we have $H(P)=\la 3,5\ra$.

The morphism $\Phi_{|3P|}: C \rightarrow \mathbb{P}^1$ corresponding to the linear system $|3P|$ is the composition $\pi_{P'} \circ \rho$. Hence, $\Phi_{|3P|}$ is a Galois covering and $P$ is a GW point. By Lemma~\ref{Ramification Points}, we have that the Weierstrass semigroup $H(Q_j)$ of a ramification point $Q_j:=\rho^{-1}((j:0:1))$ of $\Phi_{|3P|}$, $j=1, \dots, 5$, is  also $\la 3,5 \ra$. Every $Q_j$ is a GW point.	 By Theorem~\ref{Main Theorem}, we have $GW_{\la 3,5 \ra}(C) = \{ P, Q_1, \dots, Q_5\}$
\end{example}

\begin{example}
Let $C' \subset \mathbb{P}^2$ be a singular plane curve defined by the equation $Y^3Z^2 = \prod_{j=1}^7 (X - j Z)$, where $(X:Y:Z)$ is a system of homogeneous coordinates of $\mathbb{P}^2$. Let $\rho: C \rightarrow C'$ be the resolution of singularities. By an similar argument to that in  Example~\ref{Example <3,5>}, we have that $P:=\rho^{-1}((0:1:0))$ is a GW point with $H(P) = \la 3,7 \ra$. Every ramification point $Q_j:=\rho^{-1}((j:0:1))$ of $\Phi_{|3P|}$, $j=1, \dots, 7$, is also a GW point. However, $H(Q_i) = \la 3,8,13 \ra$ by Lemma~\ref{Ramification Points}. By Theorem~\ref{Main Theorem}, $GW_{\la 3,7 \ra}(C)=\{P\}$. 
\end{example}


\end{document}